\numberwithin{equation}{subsection}
\newtheorem{theorem}{Theorem}[section]
\newtheorem*{theorem*}{Theorem}
\newtheorem{lemma}[theorem]{Lemma}
\newtheorem{proposition}[theorem]{Proposition}
\newtheorem{corollary}[theorem]{Corollary}
\newtheorem*{corollary*}{Corollary}
\theoremstyle{remark}
\newtheorem{definition}[theorem]{Definition}
\theoremstyle{remark}
\newtheorem{example}[theorem]{Example}
\theoremstyle{remark}
\newtheorem{remark}[theorem]{Remark}
\theoremstyle{remark}
\newtheorem{notation}[theorem]{Notation}
\DeclareMathOperator{\id}{id}
\DeclareMathOperator{\Id}{Id}
\newcommand{\too}{\longrightarrow}
\newcommand{\dg}{\mathrm{dg}}
\newcommand{\dgHo}{\mathrm{H}}
\newcommand{\cA}{{\mathcal A}}
\newcommand{\cB}{{\mathcal B}}
\newcommand{\cC}{{\mathcal C}}
\newcommand{\cD}{{\mathcal D}}
\newcommand{\cE}{{\mathcal E}}
\newcommand{\cO}{{\mathcal O}}
\newcommand{\cS}{{\mathcal S}}
\newcommand{\cX}{{\mathcal X}}
\newcommand{\bbA}{\mathbb{A}}
\newcommand{\bbL}{\mathbb{L}}
\newcommand{\bbH}{\mathbb{H}}
\newcommand{\bbP}{\mathbb{P}}
\newcommand{\bbK}{I\mspace{-6.mu}K}
\newcommand{\bbS}{\mathbb{S}}
\newcommand{\bbZ}{\mathbb{Z}}
\newcommand{\op}{\mathrm{op}} 
\newcommand{\ie}{\textsl{i.e.}\ }
\newcommand{\End}{\mathrm{End}}
\newcommand{\Nil}{\mathrm{Nil}}
\newcommand{\Hmo}{\mathrm{Hmo}}
\newcommand{\perf}{\mathrm{perf}} 
\newcommand{\Hom}{\mathrm{Hom}} 
\newcommand{\rep}{\mathrm{rep}} 
\newcommand{\dgcat}{\mathrm{dgcat}}
\newcommand{\Spt}{\mathrm{Spt}}
\newcommand{\internalcomment}[1]{}
\title[Algebraic $K$-theory with coefficients and Kleinian singularities]{$\bbA^1$-homotopy invariance of \\algebraic $K$-theory with coefficients\\ and Kleinian singularities}
\author{Gon{\c c}alo~Tabuada}
\address{Gon{\c c}alo Tabuada, Department of Mathematics, MIT, Cambridge, MA 02139, USA}
\email{tabuada@math.mit.edu}
\urladdr{http://math.mit.edu/~tabuada}
\subjclass[2000]{13F35, 14A22, 14H20, 19D25, 19D35, 19E08, 30F50}
\date{\today}
\keywords{$\bbA^1$-homotopy, algebraic $K$-theory, Witt vectors, sheaf of dg algebras, dg orbit category, cluster category, Kleinian singularities, noncommutative algebraic geometry.}
\thanks{The author was partially supported by a NSF CAREER Award.}
\begin{document}
\begin{abstract}
C.~Weibel and Thomason-Trobaugh proved (under some assumptions) that algebraic $K$-theory  with coefficients is $\bbA^1$-homotopy invariant. In this article we generalize this result from schemes to the broad setting of dg categories. Along the way, we extend Bass-Quillen's fundamental theorem as well as Stienstra's foundational work on module structures over the big Witt ring to the setting of dg categories. Among other cases, the above $\bbA^1$-homotopy invariance result can now be applied to sheaves of (not necessarily commutative) dg algebras over stacks. As an application, we compute the algebraic $K$-theory with coefficients of dg cluster categories using solely the kernel and cokernel of the Coxeter matrix. This leads to a complete computation of the algebraic $K$-theory with coefficients of the Kleinian singularities parametrized by the simply laced Dynkin diagrams. As a byproduct, we obtain some vanishing and divisibility properties of algebraic $K$-theory (without coefficients).
\end{abstract}
\maketitle

\vskip-\baselineskip
\vskip-\baselineskip
\vskip-\baselineskip
\section{Introduction and statement of results}
Let $k$ be a base commutative ring, $X$ a quasi-compact quasi-separated $k$-scheme, and $l^\nu$ a prime power. As proved by Weibel in \cite[Page~391]{Web1} \cite[Thm.~5.2]{Web2} and by Thomason-Trobaugh in \cite[Thms.~9.5-9.6]{TT}, we have the following result:
\begin{theorem}\label{thm:main0}
\begin{itemize}
\item[(i)] When $ 1/l \in k$, the projection morphism $X[t] \to X$ gives rise to an homotopy equivalence of spectra $\bbK(X;\bbZ/l^\nu) \to \bbK(X[t];\bbZ/l^\nu)$.
\item[(ii)] When $l$ is nilpotent in $k$, the projection morphism $X[t] \to X$ gives rise to an homotopy equivalence of spectra $\bbK(X) \otimes \bbZ[1/l] \to \bbK(X[t])\otimes \bbZ[1/l]$.
\end{itemize}
\end{theorem}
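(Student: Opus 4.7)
The plan is to reduce Theorem~\ref{thm:main0} to a vanishing statement in the affine case for the relative group $NK_n(R) := \ker\bigl(K_n(R[t]) \to K_n(R)\bigr)$, and then exploit the natural module structure carried by $NK_n(R)$ over the big Witt ring $W(R)$.

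First, I would invoke the fact that the functors $\bbK(-;\bbZ/l^\nu)$ and $\bbK(-)\otimes\bbZ[1/l]$ both satisfy Zariski descent on quasi-compact quasi-separated schemes, as established by Thomason-Trobaugh through their Mayer-Vietoris machinery for non-connective $K$-theory. A standard induction on the size of an affine cover reduces the claim to the affine case $X = \Spec(R)$ with $R$ a $k$-algebra. The Bass-Quillen fundamental theorem then identifies $K_n(R[t])$ with $K_n(R) \oplus NK_n(R)$, so that both items become the statement that $NK_n(R)$ is killed after passing to the respective coefficients.

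The crucial input is Stienstra's foundational theorem, which endows $NK_n(R)$ with the structure of a \emph{continuous} module over the big Witt ring $W(R)$; continuity here means that every element is annihilated by some ideal $V_N W(R)$ generated by a sufficiently high Verschiebung operator. Using the key relations $F_m V_m = m$ in $W(R)$ together with the arithmetic hypotheses on $l$, one argues as follows. In part (ii), with $l$ nilpotent in $R$, a direct calculation in $W(R)$ shows that a sufficiently high power of $l$ lies in each ideal $V_N W(R)$, so multiplication by some power of $l$ annihilates every element of $NK_n(R)$ and the group dies upon inverting $l$. In part (i), with $1/l \in R$, the Frobenius $F_{l^\nu}$ becomes invertible on $W(R)$, and combining this with $F_{l^\nu} V_{l^\nu} = l^\nu$ and the continuity of the action forces the mod-$l^\nu$ reduction $NK_n(R;\bbZ/l^\nu)$ to vanish.

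The main obstacle, and the heart of the argument, is the proof of the continuous $W(R)$-module structure on $NK_n(R)$: without it, one could not translate purely arithmetic facts about the Witt ring into vanishing statements about the $K$-theoretic group, and in particular could not pass from ``some power'' annihilation to honest vanishing after inverting $l$ or reducing modulo $l^\nu$. A secondary technical point is the proper handling of negative $K$-theory and of the non-connective spectrum refinement of Bass-Quillen's theorem on quasi-compact quasi-separated (not necessarily Noetherian or regular) schemes, for which Thomason-Trobaugh's framework is indispensable.
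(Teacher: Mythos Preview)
Your outline is essentially the classical Weibel/Thomason--Trobaugh argument, and the paper treats Theorem~\ref{thm:main0} precisely as a cited result proved by that route: affine case via Stienstra's continuous $W(R)$-module structure on $NK_\ast$, plus the spectral sequence and universal coefficient sequence, and then Thomason--Trobaugh's reduction-to-affine machinery for general quasi-compact quasi-separated $X$. So at the level of Theorem~\ref{thm:main0} itself your plan agrees with what the paper attributes to the literature.

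What the paper actually \emph{proves} is the dg-category generalization (Theorem~\ref{thm:main1}), and there the strategy diverges from yours. There is no Zariski descent or reduction to affines; instead the paper works uniformly with an arbitrary dg category $\cA$, builds the $\bbA^1$-homotopization spectral sequence $N^p E_q(\cA)\Rightarrow E^h_{p+q}(\cA)$ for any localizing invariant, proves a fundamental theorem $NE_{q+1}(\cA)\simeq \Nil E_q(\cA)$ in that generality, and puts a $W(k)$-module structure (over the \emph{base} ring, not over $R$) on $\Nil\bbK_q(\cA)$. Your affine-cover induction buys nothing in the dg setting, which is the paper's point; conversely, the paper's argument specializes back to schemes via $\cA=\perf_\dg(X)$ without ever invoking descent.

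One concrete gap in your sketch of item~(i): the Frobenius $F_{l^\nu}$ is \emph{not} invertible on $W(R)$ (on ghost components it simply discards all slots not divisible by $l^\nu$), so the step ``$F_{l^\nu}$ invertible plus $F_{l^\nu}V_{l^\nu}=l^\nu$ plus continuity'' does not go through as written. The mechanism actually used, both by Weibel and in the paper's proof of item~(i), is the ring homomorphism $\bbZ[1/l]\to W(\bbZ[1/l])$, $\lambda\mapsto (1-t)^\lambda$, which makes $W(k)$ a $\bbZ[1/l]$-algebra whenever $1/l\in k$; then $N\bbK_q$ is a $\bbZ[1/l]$-module and the universal coefficient sequence kills $N^p\bbK_q(-;\bbZ/l^\nu)$ for all $p\geq 1$.
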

The proof of Theorem \ref{thm:main0} is quite involved! The affine case, established by Weibel, makes use of a convergent right half-plane spectral sequence, of a universal coefficient sequence, of Bass-Quillen's fundamental theorem (see \cite[Page 236]{Grayson}), and more importantly of Stienstra's foundational work on module structures over the big Witt ring (see \cite[\S8]{Stienstra}). The extension to quasi-compact quasi-separated schemes, later established by Thomason-Trobaugh, is based on a powerful method known as ``reduction to the affine case''; consult \cite[\S9.1]{TT} for details.

\vspace{0.1cm}

The first goal of this article is to generalize Theorem \ref{thm:main0} from schemes to the broad setting of dg categories. Consult \S\ref{sec:applications}-\ref{sec:singularities} for applications and computations.
\subsection*{Statement of results}
A {\em differential graded (=dg) category $\cA$}, over the base commutative ring $k$, is a category enriched over complexes of $k$-modules; see \S\ref{sub:dg}. Every (dg) $k$-algebra $A$ gives naturally rise to a dg category with a single object. Another source of examples is provided by schemes since the category of perfect complexes $\perf(X)$ of every quasi-compact quasi-separated $k$-scheme $X$ admits a canonical dg enhancement $\perf_\dg(X)$; see \cite[\S4.4]{ICM-Keller}. Given a dg category $\cA$, let us write $\cA[t]$ for the tensor product $\cA\otimes k[t]$. Our first main result is the following:
\begin{theorem}\label{thm:main1}
\begin{itemize}
\item[(i)] When $ 1/l \in k$, the canonical dg functor $\cA\to \cA[t]$ gives rise to an homotopy equivalence of spectra $\bbK(\cA;\bbZ/l^\nu) \to \bbK(\cA[t];\bbZ/l^\nu)$.
\item[(ii)] When $l$ is nilpotent in $k$, the canonical dg functor $\cA \to \cA[t]$ gives rise to an homotopy equivalence of spectra $\bbK(\cA) \otimes \bbZ[1/l] \to \bbK(\cA[t])\otimes \bbZ[1/l]$.
\end{itemize}
\end{theorem}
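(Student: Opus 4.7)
The plan is to extend Weibel's argument for the affine case of Theorem \ref{thm:main0} to the dg setting. Write $N\bbK(\cA) := \mathrm{hofib}(\bbK(\cA[t]) \to \bbK(\cA))$, where the map is induced by the augmentation $t\mapsto 0$. Since this augmentation splits the canonical inclusion $\cA \to \cA[t]$, Theorem \ref{thm:main1} is equivalent to proving: (i) $N\bbK(\cA;\bbZ/l^\nu) \simeq 0$ when $1/l\in k$, and (ii) $N\bbK(\cA)\otimes\bbZ[1/l] \simeq 0$ when $l$ is nilpotent in $k$. Equivalently, one must show the corresponding vanishing on the homotopy groups $N\bbK_n(\cA;\bbZ/l^\nu)$ and $N\bbK_n(\cA)[1/l]$.

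I would then establish the dg-categorical analogue of the Bass--Quillen fundamental theorem, which identifies $N\bbK(\cA)$ as the relevant obstruction and provides compatible splittings for $\bbK(\cA[t,t^{-1}])$ in terms of $\bbK(\cA)$, $N\bbK(\cA)$, and $N\bbK(\cA^\op)$ (the opposite dg category playing the role that $R^\op = R$ played in the commutative setting). Next, I would establish the dg-category version of Stienstra's foundational result: for every dg category $\cA$ over $k$, the graded abelian group $N\bbK_*(\cA)$ carries a natural continuous module structure over the big Witt ring $W(k)$. With these two ingredients in hand, Weibel's argument goes through: one applies the universal coefficient short exact sequence
$$0 \to N\bbK_n(\cA)/l^\nu \to N\bbK_n(\cA;\bbZ/l^\nu) \to {}_{l^\nu}N\bbK_{n-1}(\cA) \to 0,$$
and exploits the $W(k)$-action. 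The Witt-ring element attached to $l\in k$ acts as a unit on $N\bbK_*(\cA)$ when $1/l\in k$, forcing unique $l^\nu$-divisibility and hence case (i); it acts nilpotently on each element when $l$ is nilpotent in $k$, forcing case (ii) after inverting $l$.

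The principal obstacle will be the extension of Stienstra's module structure. Stienstra's original construction is built from explicit Nil-categories of finitely generated projective modules equipped with nilpotent endomorphisms, and uses features specific to the setting of commutative rings. Porting this to the dg setting requires replacing projective modules by perfect dg $\cA$-modules endowed with endomorphisms, developing the analogous Nil-dg-categories, proving compatibility with the Frobenius and Verschiebung operations acting on the associated $K$-theory spectra, and verifying the continuity of the resulting $W(k)$-action on $N\bbK_*(\cA)$. Once these are in place, together with the dg Bass--Quillen decomposition, the Weibel-style Witt-ring manipulations deliver the theorem essentially formally.
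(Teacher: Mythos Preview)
Your overall strategy matches the paper's: extend Bass--Quillen's fundamental theorem and Stienstra's $W(k)$-module structure to dg categories, then run Weibel's Witt-ring argument through the universal coefficient sequence. Two technical points are worth flagging.

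First, and most importantly, the paper does \emph{not} build Nil-dg-categories of perfect $\cA$-modules as you propose. Instead it exploits the symmetric monoidal structure on $\Hmo(k)$: one defines
\[
\mathrm{Nil}\,E_q(\cA) := \mathrm{Ker}\bigl(E_q(\cA\otimes^\bbL \Nil(k)_{\bf dg}) \to E_q(\cA)\bigr),
\]
keeping all the Nil-theory over the base commutative ring $k$ and simply tensoring with $\cA$. The Frobenius and Verschiebung operators, the biexact pairing $\End(k)\times\Nil(k)\to\Nil(k)$, and Stienstra's characteristic-polynomial trick all remain computations over $k$; the dg category $\cA$ enters only as a passive tensor factor. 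This completely sidesteps the difficulties you anticipate about nilpotent endomorphisms of perfect $\cA$-modules and is the paper's main technical simplification. The fundamental theorem then takes the clean form $NE_{q+1}(\cA)\simeq \mathrm{Nil}\,E_q(\cA)$, proved via a short exact sequence $0\to\Nil(k)_{\bf dg}\to\perf_\dg(\bbP^1)\to k[1/t]\to 0$ tensored with $\cA$; no Laurent decomposition of $\bbK(\cA[t,t^{-1}])$ is needed, and $\cA^\op$ plays no role.

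Second, the paper organizes the endgame slightly differently: rather than concluding directly from $N\bbK_q(\cA;\bbZ/l^\nu)=0$, it passes through a right half-plane spectral sequence $N^p\bbK_q\Rightarrow \bbK^h_{p+q}$ comparing $\bbK$ to its $\bbA^1$-homotopization. Your more direct route (vanishing of $N\bbK$ immediately gives the equivalence, since the augmentation splits) is equally valid and arguably cleaner.
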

Along the proof of Theorem \ref{thm:main1}, we adapt Bass-Quillen's fundamental theorem as well as Stienstra's foundational work on module structures over the big Witt ring to the broad setting of dg categories; see Theorems \ref{thm:fundamental} and \ref{thm:Witt}, respectively. These results are of independent interest. Except in Theorem \ref{thm:Witt}, we work more generally with a localizing invariant; see Definition \ref{def:localizing}. 
\section{Applications and computations}\label{sec:applications}
The second goal of this article is to explain how the above Theorem \ref{thm:main1} leads naturally to several applications and computations.
\subsection*{Sheaves of dg algebras}
Let $X$ be a quasi-compact quasi-separated $k$-scheme and $\cS$ a sheaf of (not necessarily commutative) dg $\cO_X$-algebras. Similarly to $\perf_\dg(X)$, we can consider the dg category $\perf_\dg(\cS)$ of perfect complexes of $\cS$-modules; see \cite[\S6]{Azumaya}. By applying Theorem \ref{thm:main1} to the dg category $\cA=\perf_\dg(\cS)$, we hence obtain the following generalization of Theorem~\ref{thm:main0}:
\begin{theorem}\label{thm:main2}
\begin{itemize}
\item[(i)] When $ 1/l \in k$, the projection morphism $\cS[t] \to \cS$ gives rise to an homotopy equivalence of spectra $\bbK(\cS;\bbZ/l^\nu) \to \bbK(\cS[t];\bbZ/l^\nu)$.
\item[(ii)] When $l$ is nilpotent in $k$, the projection morphism $\cS[t] \to \cS$ gives rise to an homotopy equivalence of spectra $\bbK(\cS) \otimes \bbZ[1/l] \to \bbK(\cS[t])\otimes \bbZ[1/l]$.
\end{itemize}
\end{theorem}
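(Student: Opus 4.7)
The plan is to deduce Theorem \ref{thm:main2} directly from Theorem \ref{thm:main1} by choosing the dg category $\cA := \perf_\dg(\cS)$. By definition of the algebraic $K$-theory of a sheaf of dg algebras, $\bbK(\cS) = \bbK(\perf_\dg(\cS)) = \bbK(\cA)$, so what remains is to identify $\bbK(\cS[t])$ with $\bbK(\cA[t])$ in a manner compatible with the structural maps; once this is done, the two homotopy equivalences follow immediately by applying parts (i) and (ii) of Theorem \ref{thm:main1} to $\cA$.

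The technical core is thus a Morita-type identification
\[
\perf_\dg(\cS[t]) \;\simeq\; \perf_\dg(\cS) \otimes k[t] \;=\; \cA[t],
\]
together with the verification that, under this identification, the projection $\cS[t] \to \cS$ of sheaves of dg algebras corresponds to the canonical dg functor $\cA \to \cA[t]$. Since $k[t]$ is free, hence flat, as a $k$-module, the natural comparison functor from the right-hand side to the left should be fully faithful; essential surjectivity (up to idempotent completion) then holds because the image contains a set of compact generators of $\perf_\dg(\cS[t])$, obtained by extending scalars of compact generators of $\perf_\dg(\cS)$ along $\cS \to \cS[t]$. This mirrors the familiar identification $\perf_\dg(X \times_k \bbA^1_k) \simeq \perf_\dg(X) \otimes k[t]$ for schemes, upgraded to the setting of sheaves of (not necessarily commutative) dg $\cO_X$-algebras via the foundational framework of \cite[\S6]{Azumaya}.

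The main, and essentially the only nontrivial, obstacle is this Morita equivalence. In the affine case it is a direct statement about dg $k$-algebras, but for a general quasi-compact quasi-separated $k$-scheme $X$ one needs a gluing argument over a suitable finite affine cover; this is exactly the kind of \emph{reduction to the affine case} already used by Thomason-Trobaugh in \cite[\S9.1]{TT} to pass from the affine version of Theorem \ref{thm:main0} to its global form, and it adapts to the present framework since the quasi-compact and quasi-separated hypotheses ensure that the standard Mayer-Vietoris and Zariski descent arguments for $\perf_\dg$ of sheaves of dg algebras remain available. With this identification in hand, Theorem \ref{thm:main2} becomes a formal consequence of Theorem \ref{thm:main1}.
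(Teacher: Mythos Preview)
Your approach is exactly the paper's: the paper simply states that Theorem~\ref{thm:main2} is obtained ``by applying Theorem~\ref{thm:main1} to the dg category $\cA=\perf_\dg(\cS)$'' and gives no further argument. You have in fact been more explicit than the paper, spelling out the Morita identification $\perf_\dg(\cS[t])\simeq\perf_\dg(\cS)\otimes k[t]$ that the paper leaves implicit.
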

\begin{remark}[Orbifolds and stacks]
Given an orbifold, or more generally a stack $\cX$, we can also consider the associated dg category $\perf_\dg(\cX)$ of perfect complexes. Therefore, Theorem \ref{thm:main2} holds more generally for every sheaf $\cS$ of dg $\cO_\cX$-algebras.
\end{remark}
\subsection*{DG orbit categories}
Given a dg category $\cA$ and a dg functor $F: \cA \to \cA$ which induces an equivalence of categories $\dgHo^0(F): \dgHo^0(\cA) \stackrel{\simeq}{\to} \dgHo^0(\cA)$, recall from Keller \cite[\S5.1]{Orbit} the construction of the associated dg orbit category $\cA/F^\bbZ$. Thanks to Theorem \ref{thm:main1}, {\em all} the results established in \cite{dgOrbit} can now be applied to algebraic $K$-theory with coefficients. For example, \cite[Thm.~1.5]{dgOrbit} gives rise to the result:
\begin{theorem}\label{thm:main4}
When $1/l \in k$, we have a distinguished triangle of spectra:
$$ \bbK(\cA;\bbZ/l^\nu) \stackrel{\bbK(F;\bbZ/l^\nu) - \Id}{\too} \bbK(\cA;\bbZ/l^\nu) \too \bbK(\cA/F^\bbZ; \bbZ/l^\nu) \too \Sigma \bbK(\cA; \bbZ/l^\nu)\,.$$
When $l$ is nilpotent in $k$, the same holds with $\bbK(-;\bbZ/l^\nu)$ replaced by $\bbK(-)\otimes \bbZ[1/l]$.
\end{theorem}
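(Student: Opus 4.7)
The plan is to reduce the statement to a direct invocation of \cite[Thm.~1.5]{dgOrbit}. That theorem establishes a distinguished triangle
$$ E(\cA) \stackrel{E(F)-\Id}{\too} E(\cA) \too E(\cA/F^\bbZ) \too \Sigma E(\cA) $$
for \emph{any} localizing $\bbA^1$-homotopy invariant $E$ of dg categories, with the connecting map explicitly identified as $E(F)-\Id$. Hence the proof reduces entirely to checking that the two functors appearing in the statement, namely $\bbK(-;\bbZ/l^\nu)$ and $\bbK(-)\otimes \bbZ[1/l]$, satisfy both hypotheses.

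For the localizing property: nonconnective algebraic $K$-theory $\bbK(-)$ of dg categories is a localizing invariant by the work of Schlichting and Keller. Since a short exact sequence of dg categories induces a cofiber sequence of $\bbK$-theory spectra, smashing termwise with the Moore spectrum $\bbS/l^\nu$ (resp. tensoring termwise with the flat abelian group $\bbZ[1/l]$) preserves cofiber sequences; consequently both $\bbK(-;\bbZ/l^\nu):=\bbK(-)\wedge \bbS/l^\nu$ and $\bbK(-)\otimes \bbZ[1/l]$ are again localizing invariants.

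For the $\bbA^1$-homotopy invariance: this is exactly the content of Theorem \ref{thm:main1}(i) when $1/l\in k$ and of Theorem \ref{thm:main1}(ii) when $l$ is nilpotent in $k$. Both hypotheses of \cite[Thm.~1.5]{dgOrbit} are therefore satisfied, so applying \loccit\ yields the two distinguished triangles asserted. I do not foresee a real obstacle beyond bookkeeping: the substantive input is Theorem \ref{thm:main1}, which has already been proved by this point in the paper, and the identification of the connecting morphism as $\bbK(F;\bbZ/l^\nu)-\Id$ (respectively its $\bbZ[1/l]$-analogue) is built into the construction in \cite{dgOrbit} and is natural in the localizing $\bbA^1$-invariant being applied.
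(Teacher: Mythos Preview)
Your proposal is correct and mirrors the paper's own argument: the paper simply remarks that, thanks to Theorem~\ref{thm:main1}, the results of \cite{dgOrbit} (in particular \cite[Thm.~1.5]{dgOrbit}) apply to $\bbK(-;\bbZ/l^\nu)$ and $\bbK(-)\otimes\bbZ[1/l]$, yielding the stated triangles. You have spelled out in more detail the verification that these functors are localizing and $\bbA^1$-homotopy invariant, but the approach is identical.
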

\begin{remark}[Fundamental isomorphism]
When $F$ is the identity dg functor, the dg orbit category $\cA/F^\bbZ$ reduces to $\cA[t,1/t]$ and the above distinguished triangle splits. Consequently, we obtain a fundamental isomorphism between $\bbK(\cA[t,1/t]; \bbZ/l^\nu)$ and the direct sum $\bbK(\cA;\bbZ/l^\nu) \oplus \Sigma \bbK(\cA;\bbZ/l^\nu)$. When $l$ is nilpotent in $k$, the same holds with $\bbK(-;\bbZ/l^\nu)$ replaced by $\bbK(-)\otimes \bbZ[1/l]$. 
\end{remark}
\subsection*{DG cluster categories}
Let $k$ be an algebraically closed field, $Q$ a finite quiver without oriented cycles, $kQ$ the path $k$-algebra of $Q$, $\cD^b(kQ)$ the bounded derived category of finitely generated right $kQ$-modules, and $\cD_\dg^b(kQ)$ the canonical dg enhancement of $\cD^b(kQ)$. Consider the following dg functors
\begin{eqnarray*}
\tau^{-1} \Sigma^m : \cD^b_\dg(kQ) \too \cD^b_\dg(kQ) && m \geq 0\,,
\end{eqnarray*}
where $\tau$ is the Auslander-Reiten translation. Following Keller \cite[\S7.2]{Orbit}, the {\em dg $(m)$-cluster category $\cC_Q^{(m)}$ of $Q$} is defined as the dg orbit category $\cD_\dg^b(kQ)/(\tau^{-1} \Sigma^m)^\bbZ$. In the same vein, the {\em $(m)$-cluster category of $Q$} is defined as $\dgHo^0(\cC_Q^{(m)})$. These (dg) categories play nowadays a key role in representation theory of finite dimensional algebras; see Reiten's ICM-adress \cite{Reiten}. As proved by Keller-Reiten in \cite[\S2]{Acyclic}, the $(m)$-cluster categories (with $m \geq 1$) can be conceptually characterized as those $(m+1)$-Calabi-Yau triangulated categories containing a cluster-tilting object whose endomorphism algebra has a quiver without oriented cycles. 


As explained in \cite[Cor.~2.11]{dgOrbit}, in the particular case of dg cluster categories the above Theorem \ref{thm:main4} reduces to the following one:  

\begin{theorem}\label{thm:orbit}
When $l\neq \mathrm{char}(k)$, we have a distinguished triangle of spectra
$$ \bigoplus^v_{r=1} \bbK(k;\bbZ/l^\nu) \stackrel{(-1)^m \Phi_Q - \Id}{\too} \bigoplus^v_{r=1} \bbK(k;\bbZ/l^\nu) \to \bbK(\cC_Q^{(m)}; \bbZ/l^\nu) \to  \bigoplus^v_{r=1} \Sigma \bbK(k;\bbZ/l^\nu)\,,$$
where $v$ stands for the number of vertices of $Q$ and $\Phi_Q$ for the Coxeter matrix of $Q$. When $l = \mathrm{char}(k)$, the same holds with $\bbK(-;\bbZ/l^\nu)$ replaced by $\bbK(-)\otimes \bbZ[1/l]$.
\end{theorem}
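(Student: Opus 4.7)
The plan is to apply Theorem~\ref{thm:main4} to $\cA = \cD_\dg^b(kQ)$ and $F = \tau^{-1}\Sigma^m$. Since $\tau$ and $\Sigma$ are autoequivalences of $\cD^b(kQ)$, the composite $\dgHo^0(F)$ is an equivalence, so the hypothesis of Theorem~\ref{thm:main4} is satisfied and we obtain the distinguished triangle
$$\bbK(\cD_\dg^b(kQ);\bbZ/l^\nu) \stackrel{\bbK(\tau^{-1}\Sigma^m;\bbZ/l^\nu) - \Id}{\too} \bbK(\cD_\dg^b(kQ);\bbZ/l^\nu) \too \bbK(\cC_Q^{(m)};\bbZ/l^\nu) \too \Sigma\bbK(\cD_\dg^b(kQ);\bbZ/l^\nu)\,.$$
To match this with the triangle claimed in Theorem~\ref{thm:orbit}, I need (a) to identify $\bbK(\cD_\dg^b(kQ);\bbZ/l^\nu)$ with $\bigoplus_{r=1}^v \bbK(k;\bbZ/l^\nu)$ and (b) to compute, under this identification, the endomorphism $\bbK(\tau^{-1}\Sigma^m;\bbZ/l^\nu)$.

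For (a), I would choose a topological ordering of the vertices of the acyclic quiver $Q$; with respect to such an ordering the indecomposable projective right $kQ$-modules $P_1,\dots,P_v$ form a full exceptional collection in $\cD^b(kQ)$, since $\End(P_r)=k$, all $\Hom$'s are concentrated in degree zero ($kQ$ being hereditary), and $\Hom(P_s,P_r)=0$ for $r<s$. This lifts to a semi-orthogonal decomposition of $\cD_\dg^b(kQ)$ into $v$ copies of $\cD_\dg^b(k)$, and the additivity of $\bbK$ yields the desired splitting (with coefficients).

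For (b), the shift $\Sigma$ acts as $-\Id$ on $\bbK$ (since the cofiber sequence $X\to 0\to\Sigma X$ forces $[\Sigma X]=-[X]$ on $K_0$), contributing the factor $(-1)^m$. The classical description of the inverse Auslander-Reiten translation on $K_0(kQ)\simeq\bbZ^v$ with respect to the basis $\{[P_1],\dots,[P_v]\}$ gives exactly the Coxeter matrix $\Phi_Q$. The main obstacle is to promote this $K_0$-level calculation to a spectrum-level identification: I must argue that the endomorphism of $\bigoplus_{r=1}^v \bbK(k;\bbZ/l^\nu)$ induced by $\bbK(\tau^{-1}\Sigma^m;\bbZ/l^\nu)$ is completely determined by its integer matrix on $\pi_0$. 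This rigidity is a consequence of the fact that each summand of the semi-orthogonal decomposition has endomorphism dg algebra $k$, so that the resulting matrix is one of $\bbK(k;\bbZ/l^\nu)$-linear self-maps and hence governed by its $\pi_0$-action; this is precisely the content of \cite[Cor.~2.11]{dgOrbit}. Finally, the case $l=\mathrm{char}(k)$ runs identically using the second part of Theorem~\ref{thm:main4}, with $\bbK(-;\bbZ/l^\nu)$ replaced throughout by $\bbK(-)\otimes\bbZ[1/l]$.
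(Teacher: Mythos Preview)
Your proposal is correct and follows exactly the approach the paper indicates: the paper does not give an independent proof of Theorem~\ref{thm:orbit} but simply states that, as explained in \cite[Cor.~2.11]{dgOrbit}, Theorem~\ref{thm:main4} specializes to it in the cluster-category case. You have spelled out that specialization---the semi-orthogonal decomposition of $\cD_\dg^b(kQ)$ along the exceptional collection $P_1,\ldots,P_v$, the action of $\Sigma$ as $-\Id$, and the identification of $\tau^{-1}$ with $\Phi_Q$ on $K_0$---and correctly pointed to \cite[Cor.~2.11]{dgOrbit} for the passage from the $K_0$-level matrix to the spectrum-level map.
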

As proved by Suslin in \cite[Cor.~3.13]{Suslin}, we have $\bbK_i(k;\bbZ/l^\nu) \simeq \bbZ/l^\nu$ when $i \geq 0$ is even and $\bbK_i(k;\bbZ/l^\nu) =0$ otherwise. Consequently, making use of the long exact sequence of algebraic $K$-theory groups with coefficients associated to the above distinguished triangle of spectra, we obtain the following result:
\begin{corollary}\label{cor:computation}
Consider the (matrix) homomorphism
\begin{equation}\label{eq:homo-key}
(-1)^m \Phi_Q - \Id : \bigoplus_{r=1}^v \bbZ/l^\nu \too \bigoplus_{r=1}^v \bbZ/l^\nu\,.
\end{equation}
When $l \neq \mathrm{char}(k)$, we have the following computation:
$$
\bbK_i(\cC^{(m)}_Q; \bbZ/l^\nu) \simeq \left\{ \begin{array}{lll}
\mathrm{cokernel}\,\,\mathrm{of}\,\, \eqref{eq:homo-key} & \mathrm{if} & i\geq 0\,\,\mathrm{even} \\
\mathrm{kernel}\,\,\mathrm{of}\,\,\eqref{eq:homo-key} & \mathrm{if} & i\geq 0 \,\,\mathrm{odd} \\
0 & \mathrm{if} & i<0 \,.
\end{array} \right.
$$
\end{corollary}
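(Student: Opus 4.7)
The plan is to derive the corollary directly by extracting stable homotopy groups from the distinguished triangle of Theorem \ref{thm:orbit} and then substituting Suslin's computation of $\bbK_\ast(k;\bbZ/l^\nu)$. Since the argument is purely bookkeeping there is no genuine obstacle; the only care needed lies in correctly tracking the parity of $i$ and verifying the vanishing in negative degrees.

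Concretely, passing to homotopy groups in the triangle of Theorem \ref{thm:orbit} yields the long exact sequence
\begin{equation*}
\cdots \to \bbK_{i+1}(\cC_Q^{(m)};\bbZ/l^\nu) \to \bigoplus_{r=1}^v \bbK_i(k;\bbZ/l^\nu) \xrightarrow{(-1)^m\Phi_Q - \Id} \bigoplus_{r=1}^v \bbK_i(k;\bbZ/l^\nu) \to \bbK_i(\cC_Q^{(m)};\bbZ/l^\nu) \to \bigoplus_{r=1}^v \bbK_{i-1}(k;\bbZ/l^\nu) \to \cdots
\end{equation*}
I would then feed in the values $\bbK_i(k;\bbZ/l^\nu) \simeq \bbZ/l^\nu$ for $i \geq 0$ even and $\bbK_i(k;\bbZ/l^\nu) = 0$ otherwise, recalled from \cite[Cor.~3.13]{Suslin}, where the vanishing in negative degrees is a consequence of the vanishing of the negative algebraic $K$-theory of the field~$k$ together with the universal coefficient sequence.

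The conclusion is then a case-by-case analysis according to the parity of $i$. When $i \geq 0$ is even, the two flanking groups $\bigoplus_{r=1}^v \bbK_{i\pm 1}(k;\bbZ/l^\nu)$ vanish, so the relevant portion of the exact sequence collapses to $0 \to (\bbZ/l^\nu)^v \xrightarrow{(-1)^m\Phi_Q - \Id} (\bbZ/l^\nu)^v \to \bbK_i(\cC_Q^{(m)};\bbZ/l^\nu) \to 0$, thereby identifying the target with the cokernel of \eqref{eq:homo-key}. When $i \geq 1$ is odd it is instead the middle groups $\bigoplus_{r=1}^v \bbK_i(k;\bbZ/l^\nu)$ that vanish, and exactness at $\bbK_i(\cC_Q^{(m)};\bbZ/l^\nu)$ identifies it with the kernel of the next differential, which by Theorem \ref{thm:orbit} is precisely \eqref{eq:homo-key}. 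Finally, for $i<0$ all six neighbouring terms in the long exact sequence vanish, which forces $\bbK_i(\cC_Q^{(m)};\bbZ/l^\nu)=0$ and completes the argument.
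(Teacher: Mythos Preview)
Your proposal is correct and follows precisely the approach the paper indicates: pass to the long exact sequence in homotopy groups arising from the distinguished triangle of Theorem~\ref{thm:orbit}, insert Suslin's computation of $\bbK_\ast(k;\bbZ/l^\nu)$, and read off the result according to the parity of $i$. The paper merely sketches this in a single sentence before stating the corollary, whereas you spell out the case analysis explicitly; no further argument is needed.
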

Note that Corollary \ref{cor:computation} provides a complete computation of the algebraic $K$-theory with coefficients of all dg orbit categories! Roughly speaking, all the information is encoded in the Coxeter matrix of the quiver. Note also that the kernel and cokernel of \eqref{eq:homo-key} have the same finite order. In particular, one is trivial if and only if the other one is trivial. Thanks to Corollary \ref{cor:computation}, this implies that the groups $\bbK_i(\cC^{(m)}_Q;\bbZ/l^\nu), i \geq 0$, are either all trivial or  all non-trivial.
\section{Kleinian singularities}\label{sec:singularities}
The third goal of this article is to explain how Corollary \ref{cor:computation} provide us a complete computation of the algebraic $K$-theory with coefficients of the Kleinian singularities. 

Let $k$ be an algebraically closed field of characteristic zero. Recall that the Kleinian singularities\footnote{Also usually known as {\em du Val} singularities.} are the isolated singularities of the singular affine hypersurfaces $R:=k[x,y,z]/(f)$ parametrized by the simply laced Dynkin~diagrams:
$$
\renewcommand{\arraystretch}{1.1}
 \begin{tabular}{| c | c | c | c | c | c |}
    \hline 
    type & $A_n, n \geq 1$ & $D_n, n \geq 4$ & $E_6$ & $E_7$ & $E_8$ \\ \hline
    $f$ & $x^{n+1} +yz$ & $x^{n-1}+ xy^2 + z^2$ & $x^4 + y^3 + z^2$ & $x^3y + y^3 + z^2$ & $x^5 + y^3 + z^2$ \\ \hline
  \end{tabular}
$$
Let $\underline{\mathrm{MCM}}(R)$ denote the stable category of maximal Cohen-Macaulay $R$-modules. Thanks to the work of Buchweitz \cite{Buchweitz} and Orlov \cite{Orlov,Orlov1}, this category is also usually known as the category of singularities $\cD^{\mathrm{sing}}(R)$ or equivalently as the category of matrix factorizations $\mathrm{MF}(k[x,y,z],f)$. Roughly speaking, $\underline{\mathrm{MCM}}(R)$ encodes all the information concerning the isolated singularity of the singular affine hypersurface~$R$.

Let $Q$ be a {\em Dynkin quiver}, \ie a quiver whose underlying graph is a Dynkin diagram of type $A$, $D$, or $E$. As explained by Keller in \cite[\S7.3]{Orbit}, $\underline{\mathrm{MCM}}(R)$ is equivalent to the category of finitely generated projective modules over the preprojective algebra $\Lambda(Q)$ and to the $(0)$-cluster category of $Q$. We hence conclude that the algebraic $K$-theory (with coefficients) of the Kleinian singularities is given by the algebraic $K$-theory (with coefficients) of the dg $(0)$-cluster categories $\cC^{(0)}_{A_n}, \cC^{(0)}_{D_n}, \cC^{(0)}_{E_6}, \cC^{(0)}_{E_7}, \cC^{(0)}_{E_8}$. In these cases, the homomorphisms \eqref{eq:homo-key} correspond to the following matrices (see \cite[Pages 289-290]{ARS}):
\begin{eqnarray*}
\xymatrix@C=1.5em@R=0.5em{A_n\colon 1 \ar[r] & 2 \ar[r] &  \cdots \ar[r] &  n-1 \ar[r] &n} && \begin{bmatrix}
-2 & 1 & 0  &\cdots &0 \\
-1 & -1 & \ddots &\ddots & \vdots\\
-1 & 0 & \ddots & \ddots & 0 \\
 \vdots& \vdots& \ddots & \ddots & 1 \\
 -1 &0 & \cdots& 0 & -1
\end{bmatrix}_{n \times n}
\end{eqnarray*}
\begin{eqnarray*}
\xymatrix@C=1.5em@R=0.2em{\quad\quad 1 \ar[dr] & & &   \\
D_n\colon\quad & 3 \ar[r] &  4 \ar[r] & \cdots \ar[r] & n& \\
\quad\quad 2 \ar[ur] & & & } && \begin{bmatrix}
-2 & 0 & 1 & 0 & \cdots&\cdots & 0 \\
0 & -2 & 1 & 0 & \ddots&\ddots & \vdots \\
-1& -1 & 0 & 1&\ddots& \ddots& \vdots\\
-1 & -1 & 1 & -1 & \ddots& \ddots & \vdots\\
 \vdots&\vdots & \vdots& 0 &\ddots & \ddots & 0 \\
 \vdots& \vdots&\vdots& \vdots& \ddots & \ddots & 1 \\
 -1 & -1 &1& 0 & \cdots & 0 & 1
\end{bmatrix}_{n \times n}
\end{eqnarray*}
\begin{eqnarray*}
\xymatrix@C=1.5em@R=1em{
& & 3 \ar[d] & &\\
E_6\colon 1 \ar[r] &2 \ar[r] & 4 \ar[r] & 5 \ar[r]& 6
 } && \begin{bmatrix}
-2 & 1 & 0 & 0 & 0 & 0 \\
-1 & -1 & 0& 1 & 0 & 0 \\
0 & 0 & -2 & 1& 0 & 0 \\
-2 & 0 & -1 & 0 & 1 & 0 \\
-1 & 0 & -1 & 1 & -1 & 1 \\
-1 & 0 & -1 & 1 & 0 & -1
\end{bmatrix}
\end{eqnarray*}
\begin{eqnarray*}
\xymatrix@C=1.5em@R=1em{
& & 3 \ar[d]& & & \\
E_7\colon 1 \ar[r] &2 \ar[r] & 4 \ar[r] & 5 \ar[r]& 6\ar[r] & 7
}&& \begin{bmatrix}
-2 & 1 & 0 & 0 & 0 & 0 & 0 \\
-1 & -1& 0 & 1 & 0 & 0 & 0 \\
0 &0& -2 & 1& 0 & 0 & 0 \\
-2 & 0 & -1& 0 & 1 & 0 & 0 \\
-1 & 0 & -1 &  1 & -1 & 1 & 0 \\
-1 & 0 & -1 & 1 & 0 & -1 & 1 \\
-1 & 0 & -1 & 1 & 0 & 0 & -1
\end{bmatrix}
\end{eqnarray*}
\begin{eqnarray*}
\xymatrix@C=1em@R=1em{
& & 3 \ar[d] & & & \\
E_8\colon 1 \ar[r] &2 \ar[r] & 4 \ar[r] & 5 \ar[r]& 6\ar[r] & 7\ar[r] & 8\\
}&& \begin{bmatrix}
-2 & 1 & 0 & 0 & 0 & 0 & 0 & 0 \\
-1 & -1 & 0 & 1 & 0 & 0 & 0 & 0 \\
0 & 0 & -2 & 1 & 0 & 0 & 0 & 0 \\
-1 & 0 & -1 & 0 & 1 & 0 & 0 & 0 \\
-1 & 0 & -1 &1 & -1 & 1 & 0 & 0\\
-1 & 0 & -1 & 1  & 0 & -1 & 1 & 0 \\
-1 & 0 & -1 & 1 & 0 & 0 & -1 & 1 \\
-1 & 0 & -1 & 1 & 0 & 0 & 0 & -1
\end{bmatrix}
\end{eqnarray*}
Thanks to Corollary \ref{cor:computation}, the computation of the algebraic $K$-theory with coefficients of the Kleinian singularities reduces then to the computation of the (co)kernels of the above explicit matrix homomorphisms! We now compute the type $A_n$ and leave the remaining cases to the reader. 
\begin{theorem}\label{thm:computation}
We have the following computation
$$
\bbK_i(\cC^{(0)}_{A_n}; \bbZ/l^\nu) \simeq \left\{ \begin{array}{lll}
\bbZ/\mathrm{gcd}(n+1,l^\nu)& \mathrm{if} & i\geq 0\\
0 & \mathrm{if} & i<0 \,,
\end{array} \right.
$$
where $\mathrm{gcd}(n+1,l^\nu)$ stands for the greatest common divisor of $n+1$ and $l^\nu$. Consequently, when $l| (n+1)$ (resp. $l\nmid (n+1)$) the abelian groups $\bbK_i(\cC^{(0)}_{A_n};\bbZ/l^\nu), i \geq 0$ (resp. $\bbK_i(\cC^{(0)}_{A_n};\bbZ/l^\nu), i \in \bbZ$), are non-trivial (resp. trivial).
\end{theorem}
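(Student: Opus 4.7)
The plan is to reduce the theorem, via Corollary \ref{cor:computation}, to a linear-algebraic computation of the kernel and cokernel of the matrix homomorphism $M := \Phi_{A_n}-\Id \colon (\bbZ/l^\nu)^n \to (\bbZ/l^\nu)^n$, where $M$ is the $n\times n$ integer matrix displayed in the $A_n$-row of the table of Section \ref{sec:singularities} (with $m=0$, so that $(-1)^m$ equals $+1$). Since $\mathrm{char}(k)=0$ and $l$ is a prime, we are automatically in the regime $l \neq \mathrm{char}(k)$, and the vanishing for $i<0$ is immediate from Corollary \ref{cor:computation}.

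First I would compute $\ker M$ directly by writing the system $Mv \equiv 0 \pmod{l^\nu}$ row by row. The first row yields $v_2 = 2v_1$, each intermediate row $i$ (for $2 \leq i \leq n-1$) yields the recursion $v_{i+1} = v_1 + v_i$, and the last row yields the closing condition $v_n = -v_1$. Iterating the recursion gives $v_k = kv_1$ for every $k = 1,\ldots, n$, whereupon the closing condition reduces to $(n+1)v_1 \equiv 0 \pmod{l^\nu}$. Consequently $\ker M$ is the cyclic subgroup of $\bbZ/l^\nu$ annihilated by $n+1$, which has order $\gcd(n+1, l^\nu)$.

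Next I would compute $\mathrm{coker}\,M$ via the Smith normal form of $M$ over $\bbZ$. On the one hand, $\det M = \pm(n+1)$: the characteristic polynomial of the Coxeter transformation of type $A_n$ is $1+x+\cdots+x^n$, which takes the value $n+1$ at $x=1$ (alternatively one can verify this for small $n$ and induct via cofactor expansion along the last row of the displayed matrix). On the other hand, the $(n-1)\times(n-1)$ submatrix obtained by deleting the first row and the first column is upper triangular with $-1$'s on the diagonal, hence has determinant $\pm 1$. Therefore the $(n-1)$-th determinantal divisor of $M$ equals $1$, the invariant factors are $(1,\ldots,1,n+1)$, and reducing modulo $l^\nu$ gives $\mathrm{coker}\,M \cong \bbZ/\gcd(n+1, l^\nu)$.

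Plugging these two computations into Corollary \ref{cor:computation} yields the stated formula for $\bbK_i(\cC^{(0)}_{A_n}; \bbZ/l^\nu)$ in every degree $i \geq 0$, and the final sentence of the theorem follows from the elementary observation that $\bbZ/\gcd(n+1, l^\nu)$ is trivial if and only if $l \nmid (n+1)$. The only mildly delicate step is the bookkeeping: one must match the rows of the displayed matrix against the recursion, and one must correctly identify the $(n-1)\times(n-1)$ unit minor. Both of these steps are entirely elementary once the block structure of the $A_n$-matrix is unpacked, so no substantial obstacle remains.
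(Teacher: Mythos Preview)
Your proposal is correct and follows essentially the same route as the paper: both invoke Corollary~\ref{cor:computation}, solve the kernel directly from the linear system to obtain the condition $(n+1)x_1\equiv 0$, and handle the cokernel by first computing over $\bbZ$ and then reducing modulo $l^\nu$. The only cosmetic difference is that you justify the integer cokernel $\bbZ/(n+1)$ via Smith normal form (determinant $\pm(n+1)$ together with a unit $(n-1)$-minor), whereas the paper simply asserts it, exhibits a generator, and tensors with $\bbZ/l^\nu$.
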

Intuitively speaking, Theorem \ref{thm:computation} shows that the algebraic $K$-theory with $\bbZ/l^\nu$-coefficients of the isolated singularity of the affine hypersurface $k[x,y,z]/(x^{n+1} + yz)$ measures the $l$-divisibility of the integer $n+1$. To the best of the author's knowledge, these computations are new in the literature. They lead to the following vanishing and divisibility properties of algebraic $K$-theory (without coefficients):
\begin{corollary}\label{cor:K-theory}
\begin{itemize}
\item[(i)] For every $i \geq 0$, at least one of the following algebraic $K$-theory groups $\bbK_{i}(\cC^{(0)}_{A_n}), \bbK_{i-1}(\cC^{(0)}_{A_n})$ is non-trivial.
\item[(ii)] For every $l \nmid (n+1)$ the algebraic $K$-theory groups $\bbK_i(\cC^{(0)}_{A_n}), i \in \bbZ$, are uniquely $l$-divisible, \ie they are $\bbZ[1/l]$-modules.
\end{itemize}
\end{corollary}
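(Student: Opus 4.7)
The plan is to derive both parts as purely formal consequences of Theorem~\ref{thm:computation} combined with the universal coefficient short exact sequence
$$0 \to \bbK_i(\cC^{(0)}_{A_n}) \otimes \bbZ/l^\nu \to \bbK_i(\cC^{(0)}_{A_n};\bbZ/l^\nu) \to \mathrm{Tor}^\bbZ_1\bigl(\bbK_{i-1}(\cC^{(0)}_{A_n}),\bbZ/l^\nu\bigr) \to 0\,,$$
obtained from the cofiber sequence of spectra $\bbK(-) \xrightarrow{l^\nu} \bbK(-) \to \bbK(-;\bbZ/l^\nu)$ by passing to homotopy groups.

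For item~(i), I would pick any prime divisor $l$ of $n+1$ and take $\nu = 1$. Theorem~\ref{thm:computation} then yields $\bbK_i(\cC^{(0)}_{A_n};\bbZ/l) \simeq \bbZ/\gcd(n+1,l) = \bbZ/l \neq 0$ for every $i \geq 0$, so the middle term of the universal coefficient sequence is non-trivial. This forces at least one of the two outer terms to be non-zero: either $\bbK_i(\cC^{(0)}_{A_n}) \otimes \bbZ/l \neq 0$, which implies $\bbK_i(\cC^{(0)}_{A_n}) \neq 0$, or $\mathrm{Tor}^\bbZ_1(\bbK_{i-1}(\cC^{(0)}_{A_n}),\bbZ/l) \neq 0$, which implies $\bbK_{i-1}(\cC^{(0)}_{A_n}) \neq 0$. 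This is exactly the dichotomy claimed in~(i).

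For item~(ii), fix any $l \nmid (n+1)$, so that $\gcd(n+1,l) = 1$. Theorem~\ref{thm:computation} (again with $\nu=1$) gives $\bbK_i(\cC^{(0)}_{A_n};\bbZ/l) = 0$ for every $i \in \bbZ$, so the universal coefficient sequence collapses to the conjunction $\bbK_i(\cC^{(0)}_{A_n}) \otimes \bbZ/l = 0$ and $\mathrm{Tor}^\bbZ_1(\bbK_i(\cC^{(0)}_{A_n}),\bbZ/l) = 0$ for all $i$. The first identity says that multiplication by $l$ on $\bbK_i(\cC^{(0)}_{A_n})$ is surjective, while the second says that it is injective; hence multiplication by $l$ is an isomorphism, and $\bbK_i(\cC^{(0)}_{A_n})$ is a $\bbZ[1/l]$-module, \ie uniquely $l$-divisible.

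There is essentially no serious obstacle: both items are formal manipulations of Theorem~\ref{thm:computation}. The only point worth double-checking is that the universal coefficient sequence is valid in \emph{every} integer degree, including the negative $K$-theoretic degrees relevant for $\bbK_{-1}$ in item~(i); but this is automatic from the defining cofiber sequence of spectra, which produces a long exact sequence of homotopy groups in all integer degrees.
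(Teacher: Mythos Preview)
Your proof is correct and follows essentially the same approach as the paper: both derive the corollary from Theorem~\ref{thm:computation} via the universal coefficient short exact sequence, choosing a prime $l \mid (n+1)$ for item~(i) and $l \nmid (n+1)$ for item~(ii). The paper's argument is terser (it writes the $\mathrm{Tor}$ term as ``$l$-torsion in $\bbK_{i-1}$'' and omits the explicit surjectivity/injectivity deduction you spell out), but the content is identical.
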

Roughly speaking, Corollary \ref{cor:K-theory} shows that at least half of the groups $\bbK_i(\cC^{(0)}_{A_n})$ are non-trivial and moreover that they are ``large" from the divisibility viewpoint.
\begin{proof}
Consider the following universal coefficient sequences (see \S\ref{sec:proof}):
$$
0 \to \bbK_i(\cC_{A_n}^{(0)}) \otimes_\bbZ \bbZ/l \to \bbK_i(\cC_{A_n}^{(0)}; \bbZ/l) \to \{ l\text{-}\mathrm{torsion}\,\,\mathrm{in}\,\,\bbK_{i-1}(\cC^{(0)}_{A_n})\} \to 0\,.
$$
Let $l$ be a prime factor of $n+1$. Thanks to Theorem \ref{thm:computation}, the algebraic $K$-theory groups $\bbK_i(\cC^{(0)}_{A_n};\bbZ/l), i \geq 0$, are non-trivial. Therefore, item (i) follows from the above short exact sequences. Let $l$ be a prime number which does not divides $n+1$. Thanks to Theorem \ref{thm:computation}, the algebraic $K$-theory groups $\bbK_i(\cC^{(0)}_{A_n}; \bbZ/l), i \in \bbZ$, are trivial. Therefore, item (ii) follows also from the above short exact sequences.
\end{proof}
%

%
%
%
%
\begin{remark}[$K$-theory $\neq$ $G$-theory]
Recall from Orlov \cite{Orlov,Orlov} that the dg category of singularities $\cD^{\mathrm{sing}}_\dg(R)$ is defined as the Drinfeld's dg quotient (see \cite{Drinfeld}) of $\cD^b_\dg(R)$ by its full dg subcategory of perfect complexes. In the case where $R$ is the singular affine hypersurface $k[x,y,z]/(x^{n+1} + yz)$, this leads to a long exact sequence:
$$ \cdots \too \bbK_{i+1}(\cC^{(0}_{A_n}) \too \bbK_i(R) \too G_i(R) \too \bbK_i(\cC^{(0)}_{A_n}) \too \cdots\,.$$
Thanks to item (i) of the above Corollary \ref{cor:K-theory}, we hence conclude that the natural homomorphisms $K_i(R) \to G_i(R), i \geq 0$, are {\em not} isomorphisms.
\end{remark}
\subsection*{A cyclic quotient singularity}
Let the cyclic group $\bbZ/3$ act on the power series ring $k\llbracket x,y,z\rrbracket$ by multiplication with a primitive third root of unit. As proved by Keller-Reiten in \cite[\S2]{Acyclic}, the stable category of maximal Cohen-Macaulay modules $\underline{\mathrm{MCM}}(R)$ over the fixed point ring $R:=k\llbracket x,y,z\rrbracket^{\bbZ/3}$ is equivalence to the $(1)$-cluster category of the generalized Kronecker quiver $Q\!:\!\!\!\xymatrix@C=1.7em@R=1em{1\ar@<0.7ex>[r]\ar[r]\ar@<-0.7ex>[r] & 2}$. In this case the above homomorphism \eqref{eq:homo-key} is given by the matrix $\begin{bmatrix} -9 & 3 \\-3& 0\end{bmatrix}$. 
\begin{proposition}\label{prop:computation-last}
We have the following computation:
$$
\bbK_i(\cC^{(1)}_Q; \bbZ/l^\nu) \simeq \left\{ \begin{array}{lll}
\bbZ/3 \times \bbZ/3 & \mathrm{if} & i\geq 0\,\,\mathrm{and}\,\,l=3 \\
0&&\mathrm{otherwise} \,.
\end{array} \right.
$$
\end{proposition}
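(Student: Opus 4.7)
The plan is to apply Corollary \ref{cor:computation} directly to $Q$ the generalized 3-arrow Kronecker quiver with $v=2$ and $m=1$. Since the base field $k$ has characteristic zero (as fixed at the start of Section \ref{sec:singularities}), the hypothesis $l \neq \mathrm{char}(k)$ is automatic for every prime $l$. Therefore, once one takes for granted (as stated in the paragraph preceding the proposition) that the homomorphism $(-1)^1 \Phi_Q - \Id$ is represented by the integer matrix $M := \begin{bmatrix} -9 & 3 \\ -3 & 0\end{bmatrix}$, Corollary \ref{cor:computation} reduces the computation to identifying the kernel and cokernel of $M_{l^\nu} := M \otimes \bbZ/l^\nu$ as an endomorphism of $(\bbZ/l^\nu)^2$.

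The remaining work is purely linear-algebraic and proceeds via the Smith normal form of $M$ over $\bbZ$. The gcd of the entries of $M$ equals $3$ and $\det(M) = 9$, so the Smith normal form of $M$ is the diagonal matrix $D := \mathrm{diag}(3, 3)$; concretely, one can write $M = U D V$ with $U, V \in \mathrm{GL}_2(\bbZ)$. Since invertible integer matrices remain invertible after reduction modulo $l^\nu$, the kernel and cokernel of $M_{l^\nu}$ are canonically isomorphic to those of $\mathrm{diag}(3, 3)$ acting on $(\bbZ/l^\nu)^2$.

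We then split into two cases. If $l \neq 3$, multiplication by $3$ is a unit in $\bbZ/l^\nu$, so $\mathrm{diag}(3,3)$ is an automorphism of $(\bbZ/l^\nu)^2$; both kernel and cokernel vanish, and Corollary \ref{cor:computation} yields $\bbK_i(\cC^{(1)}_Q; \bbZ/l^\nu) = 0$ for all $i \in \bbZ$. If $l = 3$, multiplication by $3$ on $\bbZ/3^\nu$ has kernel $3^{\nu-1}\bbZ/3^\nu \simeq \bbZ/3$ and cokernel $\bbZ/3$, so $\ker(M_{3^\nu}) \simeq \mathrm{coker}(M_{3^\nu}) \simeq \bbZ/3 \oplus \bbZ/3$. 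Combining the even and odd cases of Corollary \ref{cor:computation} gives $\bbK_i(\cC^{(1)}_Q; \bbZ/3^\nu) \simeq \bbZ/3 \oplus \bbZ/3$ for every $i \geq 0$ and zero for $i < 0$, which is the stated formula.

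There is essentially no obstacle: the entire argument is a direct application of Corollary \ref{cor:computation} followed by a two-by-two Smith normal form computation. The only point that deserves explicit verification is the description of the matrix $M$ as representing $-\Phi_Q - \Id$ for the 3-Kronecker quiver, but this is a routine check of the Coxeter matrix convention (and is in any case stated as given in the paragraph introducing the proposition).
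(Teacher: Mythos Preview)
Your proposal is correct and follows the same overall strategy as the paper: invoke Corollary~\ref{cor:computation} and then compute the kernel and cokernel of the $2\times 2$ matrix $M=\begin{bmatrix}-9&3\\-3&0\end{bmatrix}$ modulo $l^\nu$. The only methodological difference is in how the linear algebra is carried out. The paper computes the kernel by solving the system $-9x_1+3x_2=0,\ -3x_1=0$ over $\bbZ/l^\nu$ directly, obtaining $(\text{3-torsion})\times(\text{3-torsion})\simeq \bbZ/\gcd(3,l^\nu)\times\bbZ/\gcd(3,l^\nu)$, and computes the cokernel by first identifying the integral cokernel as $\bbZ/3\times\bbZ/3$ and then applying right exactness of $-\otimes_\bbZ\bbZ/l^\nu$. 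You instead pass to the Smith normal form $\mathrm{diag}(3,3)$ of $M$ (using that the entry-gcd is $3$ and $\det M=9$), which handles kernel and cokernel simultaneously and uniformly. Your route is slightly cleaner and more conceptual; the paper's is more explicit. Either way the computation is elementary and the conclusions agree.
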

To the best of the author's knowledge, the above computation is new in the literature. Similarly to Corollary \ref{cor:K-theory}, for every $i \geq 0$ at least one of the following algebraic $K$-theory groups $\bbK_i(\cC^{(1)}_Q), \bbK_{i-1}(\cC^{(1)}_Q)$ is non-trivial, and that for every prime number $l\neq 3$ the groups $\bbK_i(\cC^{(1)}_Q), i \in \bbZ$, are uniquely $l$-divisible.
\medbreak\noindent\textbf{Acknowledgments:} The author is very grateful to Christian Haesemeyer for important discussions and comments on a previous version of the article. The author also would like to thank Lars Hesselholt, Niranjan Ramachandran, and Chuck Weibel for useful conversations.
\begin{remark}
After the circulation of this manuscript, Christian Haesemeyer kindly informed the author that some related computations concerning the $G$-theory of a local ring of finite Cohen-Macaulay type have been performed by Viraj Navkal \cite{Student}.
\end{remark}
\section{Preliminaries}
Throughout the article, $k$ will be a base commutative ring. Unless stated differently, all tensor products will be taken over $k$.
\subsection{Dg categories}\label{sub:dg}
Let $\cC(k)$ be the category of cochain complexes of $k$-modules. A {\em differential graded (=dg) category} $\cA$ is a $\cC(k)$-enriched category and a {\em dg functor} $F:\cA\to \cB$ is a $\cC(k)$-enriched functor; consult Keller's ICM survey \cite{ICM-Keller}. In what follows, $\dgcat(k)$ stands for the category of (small) dg categories and dg functors.

Let $\cA$ be a dg category. The category $\dgHo^0(\cA)$ has the same objects as $\cA$ and $\dgHo^0(\cA)(x,y):=H^0\cA(x,y)$. The dg category $\cA^\op$ has the same objects as $\cA$ and $\cA^\op(x,y):=\cA(y,x)$. A {\em right $\cA$-module} is a dg functor $M:\cA^\op \to \cC_\dg(k)$ with values in the dg category $\cC_\dg(k)$ of cochain complexes of $k$-modules. Let us write $\cC(\cA)$ for the category of right $\cA$-modules. 
The derived category $\cD(\cA)$ of $\cA$ is by definition the localization of $\cC(\cA)$ with respect to the (objectwise) quasi-isomorphisms. Its full triangulated subcategory of compact objects will be denoted by $\cD_c(\cA)$.

A dg functor $F:\cA\to \cB$ is called a {\em Morita equivalence} if it induces an equivalence of (triangulated) categories $\cD(\cA) \stackrel{\simeq}{\to} \cD(\cB)$; see \cite[\S4.6]{ICM-Keller}. As proved in \cite[Thm.~5.3]{IMRN}, $\dgcat(k)$ admits a Quillen model structure whose weak equivalences are the Morita equivalences. Let $\Hmo(k)$ be the associated homotopy category.

The tensor product $\cA\otimes\cB$ of dg categories is defined as follows: the set of objects is the cartesian product and $(\cA\otimes\cB)((x,w),(y,z)):= \cA(x,y) \otimes \cB(w,z)$. As explained in \cite[\S2.3 and \S4.3]{ICM-Keller}, this construction gives rise to symmetric monoidal categories $(\dgcat(k), -\otimes -, k)$ and $(\Hmo(k), -\otimes^\bbL- , k)$. 

An {\em $\cA\text{-}\cB$-bimodule} is a dg functor $\mathrm{B}:\cA \otimes^\bbL \cB^\op\to \cC_\dg(k)$ or equivalently a right $(\cA^\op \otimes^\bbL \cB)$-module. A standard example is the $\cA\text{-}\cB$-bimodule
\begin{eqnarray}\label{eq:bimodules111}
{}_F\mathrm{B}:\cA\otimes^\bbL \cB^\op \too \cC_\dg(k) && (x,w) \mapsto \cB(w,F(x))
\end{eqnarray}
associated to a dg functor $F:\cA \to \cB$. Finally, let us denote by $\rep(\cA,\cB)$ the full triangulated subcategory of $\cD(\cA^\op \otimes^\bbL \cB)$ consisting of those $\cA\text{-}\cB$-bimodules $\mathrm{B}$ such that $\mathrm{B}(x,-) \in \cD_c(\cB)$ for every object $x \in \cA$. 
\subsection{Exact categories}\label{sub:exact}
Let $\cE$ be an exact category in the sense of Quillen \cite[\S2]{Quillen1}. The following examples will be used in the sequel:
\begin{example}
Let $A$ be a $k$-algebra. Recall from \cite[\S2]{Quillen1} that the category $\mathrm{P}(A)$ of finitely generated projective right $A$-modules carries a canonical exact structure.
\begin{itemize}
\item[(i)] Let $\End(A)$ be the category of endomorphisms in $\mathrm{P}(A)$. The objects are the pairs $(M,f)$, with $M \in \mathrm{P}(A)$ and $f$ an endomorphism of $M$. The morphisms $(M,f) \to (M',f')$ are the $A$-linear maps $h:M \to M'$ such that $h f = f' h$. Note that $\End(A)$ inherits naturally from $\mathrm{P}(A)$ an exact structure making the forgetful functor $\End(A) \to \mathrm{P}(A), (M,f) \mapsto M$,~exact.
\item[(ii)]
Let $\Nil(A)$ be the category of nilpotent endomorphisms in $\mathrm{P}(A)$. By construction, $\Nil(A)$ is a full exact subcategory of $\End(A)$. 
\end{itemize}
\end{example}
Following Keller \cite[\S4.4]{ICM-Keller}, the {\em derived dg category $\cD_\dg(\cE)$ of $\cE$} is defined as the Drinfeld's dg quotient $\cC_\dg(\cE)/Ac_\dg(\cE)$ of the dg category of cochain complexes over $\cE$ by the full dg subcategory of acyclic complexes. 
\begin{notation}
Let $\cE_{{\bf dg}}$ denote the full subcategory of $\cD_\dg(\cE)$ consisting of those objects which belong to $\cD_c(\cE)$. By construction, we have $\dgHo^0(\cE_{\bf dg})\simeq \cD_c(\cE)$. Note that when $\cE=\mathrm{P}(A)$, we have a Morita equivalence between $\cE_{\bf dg}$ and $A$.
\end{notation}
Every exact functor $\cE \to \cE'$ gives rise to a dg functor $\cD_\dg(\cE) \to \cD_\dg(\cE')$ which restricts to $\cE_{{\bf dg}} \to \cE'_{{\bf dg}}$. In the same vein, every multi-exact functor $\cE \times \cdots \times \cE' \to \cE''$ gives rise to a dg functor $\cD_\dg(\cE) \otimes^\bbL \cdots \otimes^\bbL \cD_\dg(\cE') \to \cD_\dg(\cE'')$ which restricts to $\cE_{{\bf dg}} \otimes^\bbL \cdots \otimes^\bbL \cE'_{{\bf dg}} \to \cE''_{{\bf dg}}$.
\subsection{Algebraic $K$-theory with coefficients}\label{sub:K-theory}
Let $\Spt$ be the homotopy category of spectra and $\bbS$ the sphere spectrum. Recall from \cite[Thm.~10.3]{Duke} the construction of the nonconnective algebraic $K$-theory functor $\bbK:\dgcat(k) \to \Spt$. Given a prime power $l^\nu$, the algebraic $K$-theory with $\bbZ/l^\nu$-coefficients is defined as follows\footnote{Given any two prime numbers $p$ and $q$, we have $\bbS/pq \simeq \bbS/p \oplus \bbS/q$ in $\Spt$. Therefore, without loss of generality, we can (and will) work solely with one prime power $l^\nu$.}
\begin{eqnarray}\label{eq:coefficients}
\bbK(-;\bbZ/l^\nu): \dgcat(k) \too \Spt && \cA \mapsto \bbK(\cA) \wedge^\bbL \bbS/l^\nu\,,
\end{eqnarray} 
where $\bbS/l^\nu$ stands for the mod-$l^\nu$ Moore spectrum of $\bbS$. In the same vein, we have the functor $\bbK(-)\otimes \bbZ[1/l]: \dgcat(k) \to \Spt$ defined by the homotopy colimit
$$\bbK(\cA)\otimes \bbZ[1/l]:= \mathrm{hocolim}\left(\bbK(\cA) \stackrel{\cdot l}{\too} \bbK(\cA) \stackrel{\cdot l}{\too}\cdots  \right) \,.
$$
When $\cA=\perf_\dg(X)$, with $X$ a quasi-compact quasi-separated $k$-scheme, $\bbK(\cA)$ agrees with $\bbK(X)$; see \cite[\S5.2]{ICM-Keller}. Consequently, $\bbK(\cA;\bbZ/l^\nu)$ and $\bbK(\cA)\otimes \bbZ[1/l]$ agree with $\bbK(X;\bbZ/l^\nu)$ and $\bbK(X) \otimes \bbZ[1/l]$, respectively.
\subsection{Bass' construction}\label{sub:Bass}
Let $H: \dgcat(k) \to \mathrm{Ab}$ be a functor with values in the category of abelian groups. Following Bass \cite[\S XII]{Bass}, consider the sequence of functors $N^pH: \dgcat(k) \to \mathrm{Ab}, p \geq 0$, defined by $N^0H(\cA) := H(\cA)$ and by
\begin{eqnarray}\label{eq:formula}
& N^pH(\cA):=\mathrm{Kernel}\left(N^{p-1}H(\cA[t]) \stackrel{\id \otimes (t=0)}{\too} N^{p-1} H(\cA)\right) & p \geq 1\,.
\end{eqnarray}
Note that the canonical dg functor $\cA \to \cA[t]$ gives rise to a splitting $N^{p-1}H(\cA[t]) \simeq N^pH(\cA) \oplus N^{p-1}H(\cA)$. Let $\mathrm{Ch}^{\geq 0}(\bbZ)$ be the category of non-negatively graded chain complexes of abelian groups. Following Bass, we have also the functor  
\begin{eqnarray*}
N^\bullet H: \dgcat(k) \too \mathrm{Ch}^{\geq 0}(\bbZ) && \cA \mapsto N^\bullet H(\cA)\,,
\end{eqnarray*}
where the chain complex $N^\bullet H(\cA)$ is defined by $NH^0(\cA):= H(\cA)$ and by
$$
N^pH(\cA):= \bigcap_{i=1}^p \mathrm{Kernel}\left(H(\cA[t_1, \ldots, t_p]) \stackrel{\id \otimes (t_i=0)}{\too} H(\cA[t_1, \ldots, \widehat{t_i}, \ldots, t_p])\right)\,\,\, p \geq 1$$
\begin{eqnarray*}
N^pH(\cA) \too N^{p-1}H(\cA) && t_i \mapsto \left\{ \begin{array}{lcr}
1 - \sum_{i=2}^p t_i & \text{if} & i =1 \\
t_{i-1} & \text{if} & i \neq 1 \\
\end{array} \right.
\end{eqnarray*}
Note that the above two definitions of $N^pH(\cA)$ are isomorphic. In what follows we will simply write $NH(\cA)$ instead of $NH^1(\cA)$.

\section{Proof of Theorem \ref{thm:main1}}\label{sec:proof}
In what follows, we will work often with the following general notion:
\begin{definition}\label{def:localizing}
A functor $E: \dgcat(k) \to \Spt$ is called a {\em localizing invariant} if it inverts Morita equivalences and sends short exact sequences of dg categories (see \cite[\S4.6]{ICM-Keller}) to distinguished triangles of spectra:
\begin{eqnarray*}
0 \too \cA \too \cB \too \cC \too 0 & \mapsto & E(\cA) \too E(\cB) \too E(\cC) \stackrel{\partial}{\too} \Sigma E(\cA)\,.
\end{eqnarray*}
\end{definition}
Thanks to the work of Blumberg and Mandell, Keller, Schlichting, Thomason and Trobaugh, and others (see \cite{BM,Exact,Exact2,Schlichting,TT}), examples include not only nonconnective algebraic $K$-theory (with coefficients) but also Hochschild homology, cyclic homology, negative cyclic homology, periodic cyclic homology, topological Hochschild homology, topological cyclic homology, etc. Given an integer $q \in \bbZ$, the abelian group $\Hom_\Spt(\Sigma^q \bbS, E(\cA))$ will be denoted by $E_q(\cA)$. 
\subsection*{Step I - Spectral sequence}
Let $E:\dgcat(k) \to \Spt$ be a localizing invariant and $\Delta_n:=k[t_0, \ldots, t_n]/(\sum_{i=0}^n t_i -1), n \geq 0$, the simplicial $k$-algebra with faces and degeneracies given by the following formulas:
\begin{eqnarray*}
d_r(t_i) := \left\{ \begin{array}{lcr}
t_i & \text{if} & i <r \\
0 & \text{if} & i =r \\
t_{i-1} & \text{if} & i > r \\
\end{array} \right.
&
&
s_r(t_i) := \left\{ \begin{array}{lcr}
t_i & \text{if} & i <r \\
t_i + t_{i+1} & \text{if} & i =r \\
t_{i+1} & \text{if} & i > r \\
\end{array} \right.\,.
\end{eqnarray*}
Out of this data, we can construct the {\em $\bbA^1$-homotopization} of $E$:
\begin{eqnarray*}
E^h: \dgcat(k) \too \Spt && \cA \mapsto \mathrm{hocolim}_n E(\cA \otimes \Delta_n)\,.
\end{eqnarray*}
Note that $E^h$ comes equipped with a natural $2$-morphism $E \Rightarrow E^h$. As explained in \cite[Prop.~5.2]{A1-homotopy}, $E^h$ remains a localizing invariant and the canonical dg functor $\cA \to \cA[t]$ gives rise to an homotopy equivalence of spectra $E^h(\cA) \to E^h(\cA[t])$. 

Given an integer $q \in \bbZ$, consider the functor $E_q: \dgcat(k) \to \mathrm{Ab}$ and the associated non-negatively graded chain complex of abelian groups:
\begin{equation}\label{eq:complex}
0 \longleftarrow E_q(\cA) \stackrel{d_0 - d_1}{\longleftarrow} E_q(\cA[t]) \longleftarrow \cdots \stackrel{(-1)^r \sum_r d_r}{\longleftarrow} E_q(\cA\otimes \Delta_n) \longleftarrow \cdots\,.
\end{equation}
Under the following isomorphisms
\begin{eqnarray*}
\Delta_n \stackrel{\sim}{\to} k[t_1, \ldots, t_n] && t_i \mapsto \left\{ \begin{array}{lcr}
1 - \sum^n_{i=1} t_i  & \text{if} & i =0 \\
t_i& \text{if} & i \neq 0 \\
\end{array} \right.
\end{eqnarray*}
the (Moore) normalization of \eqref{eq:complex} identifies with $N^\bullet E_q(\cA)$. Consequently, following Quillen \cite{Quillen66}, we obtain a standard convergent right half-plane spectral sequence $E^1_{pq}= N^p E_q(\cA) \Rightarrow E_{p+q}^h(\cA)$. In the particular case of algebraic $K$-theory with coefficients, we hence have the following convergent spectral sequence:
\begin{equation}\label{eq:spectral1}
E^1_{pq}=N^p\bbK_q(\cA; \bbZ/l^\nu) \Rightarrow \bbK^h_{p+q}(\cA; \bbZ/l^\nu)
\end{equation}
Similarly, since $\pi_q(\bbK(\cA) \otimes \bbZ[1/l])\simeq \bbK_q(\cA)_{\bbZ[1/l]}$, we have the spectral sequence:
\begin{equation}\label{eq:spectral2}
E^1_{pq}=N^p \bbK_q(\cA)_{\bbZ[1/l]} \Rightarrow \bbK^h_{p+q}(\cA)_{\bbZ[1/l]}\,.
\end{equation}
\subsection*{Step II - Universal coefficient sequence}
Let $E:\dgcat(k) \to \Spt$ be a localizing invariant. Similarly to \eqref{eq:coefficients}, consider the following functor:
\begin{eqnarray*}
E(-;\bbZ/l^\nu): \dgcat(k) \too \Spt && \cA \mapsto E(\cA) \wedge^\bbL \bbS/l^\nu\,.
\end{eqnarray*}
For every dg category $\cA$ we have a distinguished triangle of spectra
\begin{equation}\label{eq:triangle-spectra}
E(\cA) \stackrel{\cdot l^\nu}{\too} E(\cA) \too E(\cA; \bbZ/l^\nu) \stackrel{\partial}{\too} \Sigma E(\cA)\,.
\end{equation}
Consequently, the associated long exact sequence (obtained by applying the functor $\Hom_{\Spt}(\bbS,-)$ to \eqref{eq:triangle-spectra}) breaks up into short exact sequences
$$
0 \to E_q(\cA) \otimes_\bbZ \bbZ/l^\nu \to E_q(\cA; \bbZ/l^\nu) \to \{l^\nu\text{-}\mathrm{torsion}\,\,\mathrm{in}\,\,E_{q-1}(\cA)\} \to 0 \,.
$$
Note that since the above distinguished triangle of spectra \eqref{eq:triangle-spectra} is functorial on $\cA$, we have moreover the following short exact sequences
\begin{equation*}
0 \to N^pE_q(\cA) \otimes_\bbZ \bbZ/l^\nu \to N^p E_q(\cA; \bbZ/l^\nu) \to \{l^\nu\text{-}\mathrm{torsion}\,\,\mathrm{in}\,\,N^pE_{q-1}(\cA)\} \to 0\,.
\end{equation*}
\subsection*{Step III - Fundamental theorem}
Recall that we have the exact functors:
\begin{eqnarray}
\Nil(k) \subset \End(k) \too \mathrm{P}(k) && (M,f) \mapsto M \label{eq:exact1} \\
\mathrm{P}(k) \too \Nil(k) \subset \End(k) && M \mapsto (M,0) \label{eq:exact2}\,.
\end{eqnarray}
Let $E:\dgcat(k) \to \Spt$ be a localizing invariant. Given a dg category $\cA$ and an integer $q \in \bbZ$, consider the following abelian group:
$$
\mathrm{Nil}E_q(\cA):=  \mathrm{Kernel}\left(E_q(\cA\otimes^\bbL \Nil(k)_{\bf dg}) \stackrel{\id \otimes \eqref{eq:exact1}}{\too} E_q(\cA\otimes^\bbL \mathrm{P}(k)_{\bf dg})\simeq E_q(\cA)\right)\,.$$
Note that since $\eqref{eq:exact1}\circ \eqref{eq:exact2}=\id$, the following morphism
\begin{equation}\label{eq:morphism-aux}
E(\cA) \simeq E(\cA\otimes^{\bbL} \mathrm{P}(k)_{\bf dg}) \stackrel{\id \otimes \eqref{eq:exact2}}{\too} E(\cA\otimes^\bbL \Nil(k)_{\bf dg})
\end{equation}
gives rise to a splitting $E_q(\cA\otimes^\bbL \Nil(k)_{\bf dg}) \simeq \Nil E_q(\cA) \oplus E_q(\cA)$.
\begin{theorem}[Fundamental theorem]\label{thm:fundamental}
We have $NE_{q+1}(\cA) \simeq \mathrm{Nil}E_q(\cA)$.
\end{theorem}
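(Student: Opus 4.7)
\noindent\textit{Proof plan.} The strategy adapts Bass-Quillen's classical argument (cf.~\cite[p.~236]{Grayson}) to the dg setting by means of a localization sequence of dg categories. I would start from the short exact sequence
\begin{equation*}
0 \too \perf_\dg(k[t])^{t\text{-}\mathrm{nil}} \too \perf_\dg(k[t]) \too \perf_\dg(k[t, t^{-1}]) \too 0,
\end{equation*}
where $\perf_\dg(k[t])^{t\text{-}\mathrm{nil}}$ denotes the full dg subcategory of perfect $k[t]$-complexes whose cohomology is $t$-nilpotent. Tensoring with $\cA$ (which preserves short exact sequences up to Morita equivalence) and applying the localizing invariant $E$ produces a distinguished triangle of spectra, which will ultimately carry the fundamental theorem.

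Next, I would identify the fiber term with $E(\cA \otimes^\bbL \Nil(k)_{\bf dg})$ by exhibiting a Morita equivalence
\begin{equation*}
\Nil(k)_{\bf dg} \stackrel{\sim}{\too} \perf_\dg(k[t])^{t\text{-}\mathrm{nil}}, \qquad (M, f) \mapsto \big[\,M[t] \stackrel{t-f}{\too} M[t]\,\big].
\end{equation*}
Fully faithfulness is a direct Koszul $\mathrm{RHom}$-computation (the right-hand complex is the Koszul resolution of the $k[t]$-module $M_f$ defined by letting $t$ act as $f$), while essential surjectivity is obtained by a d\'evissage along the $t$-power filtration on cohomology, reducing to the case of the objects $k[t]/(t^n)$, which are manifestly in the image. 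Once this identification is in place, the triangle reads
\begin{equation*}
E(\cA \otimes^\bbL \Nil(k)_{\bf dg}) \stackrel{\iota_*}{\too} E(\cA[t]) \too E(\cA[t, t^{-1}]) \stackrel{\partial}{\too} \Sigma E(\cA \otimes^\bbL \Nil(k)_{\bf dg}).
\end{equation*}

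The final step is to extract $NE_{q+1}(\cA) \simeq \mathrm{Nil}E_q(\cA)$. The canonical splittings $E(\cA \otimes^\bbL \Nil(k)_{\bf dg}) \simeq E(\cA) \oplus \mathrm{Nil}E(\cA)$ (via the forgetful functor and the section $M \mapsto (M,0)$) and $E(\cA[t]) \simeq E(\cA) \oplus NE(\cA)$ (via $t=0$ and the inclusion $\cA \to \cA[t]$) descend through the triangle: in the Koszul presentation the composites $E(\cA) \to E(\cA \otimes^\bbL \Nil(k)_{\bf dg}) \stackrel{\iota_*}{\to} E(\cA[t])$ and $E(\cA \otimes^\bbL \Nil(k)_{\bf dg}) \stackrel{\iota_*}{\to} E(\cA[t]) \to E(\cA)$ are null-homotopic, since the classes $[M[t] \stackrel{t}{\to} M[t]]$ and $[M \stackrel{-f}{\to} M]$ vanish additively. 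Restricting to the non-trivial summands and combining with a Mayer-Vietoris argument applied to the covering $\cA[t], \cA[t^{-1}] \hookrightarrow \cA[t, t^{-1}]$ (which pins down the Laurent middle term) yields the desired connecting isomorphism. The main obstacle is the Morita equivalence in the second step: verifying essential surjectivity in the dg rather than exact-category setting requires a delicate derived-categorical d\'evissage, and the compatibility of this equivalence with the splittings above must be tracked carefully throughout.
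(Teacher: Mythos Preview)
Your Morita identification $\Nil(k)_{\bf dg}\simeq\perf_\dg(k[t])^{t\text{-}\mathrm{nil}}$ parallels the paper's (which obtains it from Grayson--Quillen's equivalence $\Nil(k)\simeq\bbH_{1,t}(k[t])$ together with a Thomason--Trobaugh result rather than a direct Koszul argument), and your null-homotopy observations are correct---in fact the entire map $\iota_*$ vanishes in $\Hmo_0(k)$, since the Koszul presentation exhibits $\iota$ as the cone of an endomorphism of a single functor.

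The gap is in the extraction step. Your affine localization triangle only yields
\[
0 \too E_{q+1}(\cA[t]) \too E_{q+1}(\cA[t,t^{-1}]) \too E_q(\cA)\oplus\mathrm{Nil}E_q(\cA) \too 0,
\]
and the Laurent term $E_{q+1}(\cA[t,t^{-1}])$ is not known a priori; nothing here singles out $NE_{q+1}(\cA)$ inside it or matches it with $\mathrm{Nil}E_q(\cA)$. Your proposed ``Mayer--Vietoris for the covering $\cA[t],\cA[t^{-1}]\hookrightarrow\cA[t,t^{-1}]$'' has the geometry backwards: $\mathrm{Spec}(k[t,t^{-1}])$ is the \emph{intersection} of the two affine lines, not something they cover; what they cover is $\bbP^1$. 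Once you invoke that Mayer--Vietoris correctly you are forced through the projective bundle formula for $\bbP^1$, and you have essentially reproduced the paper's argument by a longer route.

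The paper sidesteps the Laurent ring by working directly with the $\bbP^1$ localization sequence
\[
0 \too \Nil(k)_{\bf dg} \too \perf_\dg(\bbP^1) \stackrel{\bbL j^\ast}{\too} \perf_\dg(\mathrm{Spec}(k[1/t])) \too 0.
\]
The quotient is now a \emph{polynomial} ring, so $E_{q+1}(\cA[1/t])\simeq E_{q+1}(\cA)\oplus NE_{q+1}(\cA)$ is already available. The projective bundle decomposition $E(\cA\otimes^\bbL\bbP^1)\simeq E(\cA)\oplus E(\cA)$ via $\iota_0,\iota_{-1}$, together with the identification (proved in $\Hmo_0(k)$ via the sequence $0\to\cO_{\bbP^1}(-1)\to\cO_{\bbP^1}\to\iota(\mathrm{pt})\to 0$) of the composite $E(\cA)\to E(\cA\otimes^\bbL\Nil(k)_{\bf dg})\to E(\cA\otimes^\bbL\bbP^1)$ with $E(\iota_0)-E(\iota_{-1})$, collapses the long exact sequence directly to
\[
0 \too E_{q+1}(\cA) \too E_{q+1}(\cA[1/t]) \too \mathrm{Nil}E_q(\cA) \too 0,
\]
from which $NE_{q+1}(\cA)\simeq\mathrm{Nil}E_q(\cA)$ is read off as the cokernel.
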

The remaining of this subsection is devoted to the proof of the above Theorem \ref{thm:fundamental}. Let $\bbP^1$ be the projective line over the base commutative ring $k$ and $i: \mathrm{Spec}(k[t]) \hookrightarrow \bbP^1$ and $j: \mathrm{Spec}(k[1/t]) \hookrightarrow \bbP^1$ the classical Zariski open cover.
\begin{proposition}\label{proof:exact-seq}
We have a short exact sequence of dg categories
\begin{equation}\label{eq:exact-1}
0 \too \Nil(k)_{\bf dg} \too \perf_\dg(\bbP^1) \stackrel{\bbL j^\ast}{\too} \perf_\dg(\mathrm{Spec}(k[1/t])) \too 0 \,. 
\end{equation}
\end{proposition}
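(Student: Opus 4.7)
The plan is to combine Thomason-Trobaugh's Zariski localization for $\bbP^1$ with a Koszul-type identification of the resulting kernel. First I would apply the dg enhancement of the Thomason-Trobaugh localization theorem to the quasi-compact open immersion $j$, obtaining a short exact sequence of dg categories
\[
0 \too \perf^Z_\dg(\bbP^1) \too \perf_\dg(\bbP^1) \stackrel{\bbL j^\ast}{\too} \perf_\dg(\mathrm{Spec}(k[1/t])) \too 0\,,
\]
in which $Z=\bbP^1\setminus \mathrm{Spec}(k[1/t])=\{[0{:}1]\}$ and $\perf^Z_\dg(\bbP^1)$ denotes the full dg subcategory of those perfect complexes whose cohomology is set-theoretically supported on $Z$.

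Next I would reduce the kernel to the affine chart. Since $Z$ lies inside the affine open $V:=\mathrm{Spec}(k[t])\subset \bbP^1$ and is closed in $V$ (being cut out by the ideal $(t)$), Zariski excision for perfect complexes with prescribed support yields a Morita equivalence $\bbL i^\ast:\perf^Z_\dg(\bbP^1)\stackrel{\sim}{\too}\perf^{(t)}_\dg(V)$, where $i:V\hookrightarrow \bbP^1$ denotes the open immersion.

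It would then remain to identify $\perf^{(t)}_\dg(V)$ with $\Nil(k)_{\bf dg}$. I would consider the dg functor
\[
\Phi:\Nil(k)_{\bf dg} \too \perf^{(t)}_\dg(V),\qquad (M,f) \longmapsto \bigl[\,M\otimes_k k[t]\stackrel{t\otimes \id\, -\, \id \otimes f}{\too} M\otimes_k k[t]\,\bigr]\,,
\]
whose image is, up to quasi-isomorphism, the $k[t]$-module $M$ with $t$ acting as the nilpotent endomorphism $f$, and hence supported at $(t)$. Quasi-fully-faithfulness would follow from a direct computation of $\mathrm{RHom}_{k[t]}(\Phi(M,f),\Phi(M',f'))$ using the very Koszul resolution defining $\Phi$, which on cohomology recovers the $\mathrm{Ext}$-groups computed in the exact category $\Nil(k)$. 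Triangulated generation of the essential image, up to summands, would follow by dévissage: every perfect complex supported at $(t)$ has cohomology killed by some $t^n$, and is built by iterated extensions of objects of the form $\Phi(P,0)$ with $P\in \mathrm{P}(k)$.

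The principal obstacle will be performing both the dévissage and the $\mathrm{Ext}$-comparison at the dg level, so as to produce a genuine Morita equivalence of dg categories rather than just an equivalence of the underlying triangulated homotopy categories. I expect to handle this by exploiting the functoriality of the Koszul resolution defining $\Phi$: it provides, already at the chain level, cofibrant resolutions whose Hom-complex is a two-term complex of $k$-modules matching the natural dg enhancement of $\mathrm{Ext}^\ast_{\Nil(k)}$, and composition is evidently respected. Once $\Phi$ is thus shown to be a Morita equivalence, splicing it into the localization sequence of the first step yields the desired short exact sequence~\eqref{eq:exact-1}.
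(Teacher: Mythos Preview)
Your strategy coincides with the paper's for the first two moves: Thomason--Trobaugh localization on $\bbP^1$ produces the short exact sequence with kernel $\perf^Z_\dg(\bbP^1)$, and excision along $i:\mathrm{Spec}(k[t])\hookrightarrow\bbP^1$ reduces that kernel to $\perf_\dg(\mathrm{Spec}(k[t]))_{Z'}$ with $Z'=(t)$. The divergence is in the final identification. The paper does not build a Koszul functor by hand; instead it invokes the Grayson--Quillen equivalence of \emph{exact} categories $\Nil(k)\stackrel{\sim}{\to}\bbH_{1,t}(k[t])$, $(M,f)\mapsto M_f$, where $\bbH_{1,t}(k[t])$ is the exact category of $t$-torsion $k[t]$-modules of projective dimension $\leq 1$, and then cites a further Thomason--Trobaugh result giving a Morita equivalence $\bbH_{1,t}(k[t])_{\bf dg}\simeq \perf_\dg(\mathrm{Spec}(k[t]))_{Z'}$. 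Your Koszul complex $[M[t]\stackrel{t-f}{\to}M[t]]$ is precisely the length-one projective resolution of $M_f$, so you are in effect re-deriving both cited equivalences simultaneously. The paper's route is shorter and sidesteps the dg-level worries you flag (quasi-fully-faithfulness and d\'evissage at the enhanced level) by reducing everything to an equivalence of exact categories, after which passage to $(-)_{\bf dg}$ is automatic; your route is more self-contained but requires the explicit $\mathrm{RHom}$ computation and the generation argument you outline.
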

\begin{proof}
Consider the following commutative diagram
$$
\xymatrix@C=1.3em@R=2em{
0 \ar[r] & \perf_\dg(\bbP^1)_Z \ar[d] \ar[r]& \perf_\dg(\bbP^1) \ar[d]_-{\bbL i^\ast} \ar[r]^-{\bbL j^\ast} & \perf_\dg(\mathrm{Spec}(k[1/t])) \ar[d] \ar[r] & 0 \\
0 \ar[r] & \perf_\dg(\mathrm{Spec}(k[t]))_{Z'} \ar[r] & \perf_\dg(\mathrm{Spec}(k[t])) \ar[r] & \perf_\dg(\mathrm{Spec}(k[t,1/t])) \ar[r] & 0 \,, 
}
$$
where $Z$ (resp. $Z'$) stands for the closed set $\bbP^1 - \mathrm{Spec}(k[1/t])$ (resp. $\mathrm{Spec}(k[t]) - \mathrm{Spec}(k[t,1/t])$) and $\perf_\dg(\bbP^1)_Z$ (resp. $\perf_\dg(\mathrm{Spec}(k[t]))_{Z'}$) for the dg category of those perfect complexes of $\cO_{\bbP^1}$-modules (resp. $\cO_{\mathrm{Spec}(k[t])}$-modules) which are supported on $Z$ (resp. on $Z'$). As proved by Thomason-Trobaugh in \cite[Thms.~2.6.3 and 7.4]{TT}, both rows are short exact sequences of dg categories and the left-hand side vertical dg functor is a Morita equivalence. 

Let us denote by $\bbH_{1,t}(k[t])$ the exact category of $t$-torsion $k[t]$-modules of projective dimension $\leq 1$. As proved by Grayson-Quillen in \cite[Page~236]{Grayson}, we have the following equivalence of exact categories
\begin{eqnarray}\label{eq:equivalence}
\Nil(k) \stackrel{\simeq}{\too} \bbH_{1,t}(k[t]) && (M,f) \mapsto M_f\,,
\end{eqnarray}
where $M_f$ stands for the $k[t]$-module $M$ on which $t$ acts as $f$. We hence obtain an induced Morita equivalence $\Nil(k)_{\bf dg} \simeq \bbH_{1,t}(k[t])_{\bf dg}$. The proof follows now from the Morita equivalence $\bbH_{1,t}(k[t])_{\bf dg}\simeq \perf_\dg(\mathrm{Spec}(k[t]))_{Z'}$ established by Thomason-Trobaugh in \cite[\S5.7]{TT}.
\end{proof}
As proved by Drinfeld in \cite[Prop.~1.6.3]{Drinfeld}, the functor $\cA\otimes^\bbL-: \Hmo(k) \to \Hmo(k)$ preserves short exact sequences of dg categories. Consequently, \eqref{eq:exact-1} gives rise to the following short exact sequence of dg categories
\begin{equation}\label{eq:exact-2}
0 \too \cA\otimes^\bbL\Nil(k)_{\bf dg} \too \cA \otimes^\bbL \bbP^1 \stackrel{\id \otimes \bbL j^\ast}{\too} \cA[1/t] \too 0 \,,
\end{equation}
where $\cA \otimes^\bbL\bbP^1$ stands for $\cA \otimes^\bbL \perf_\dg(\bbP^1)$. By applying the functor $E$ to \eqref{eq:exact-2}, we hence obtain a distinguished triangle of spectra 
\begin{equation}\label{eq:triangle}
E(\cA \otimes^\bbL \Nil(k)_{\bf dg}) \too E(\cA \otimes^\bbL \bbP^1) \too E(\cA[1/t]) \stackrel{\partial}{\too} \Sigma E(\cA \otimes^\bbL \Nil(k)_{\bf dg})\,.
\end{equation}
Now, recall from Thomason \cite[\S2.5-2.7]{Thomason} that we have two fully faithful dg functors 
\begin{eqnarray*}
\iota_{-1}: \perf_\dg(\mathrm{pt}) \too \perf_\dg(\bbP^1) && \cO_{\mathrm{pt}} \mapsto \cO_{\bbP^1}(-1) \\ \iota_0: \perf_\dg(\mathrm{pt}) \too \perf_\dg(\bbP^1) && \cO_{\mathrm{pt}} \mapsto \cO_{\bbP^1}(0)\,. 
\end{eqnarray*}
The dg functor $\iota_{-1}$ induces a Morita equivalence between $\perf_\dg(\mathrm{pt})$ and the Drinfeld's dg quotient $\perf_\dg(\bbP^1)/\iota_0 \perf_\dg(\mathrm{pt})$. Following \cite[\S13]{Duke}, we hence obtain a {\em split} short exact sequence of dg categories
\begin{equation}\label{eq:split}
\xymatrix{
0 \ar[r] & \perf_\dg(\mathrm{pt}) \ar[r]_-{\iota_0}  & \perf_\dg(\bbP^1) \ar[r]_s \ar@/_2ex/[l]_-r & \perf_\dg(\mathrm{pt}) \ar@/_2ex/[l]_-{\iota_{-1}}  \ar[r] & 0\,,
}
\end{equation}
where $r$ is the right adjoint of $\iota_0$, $r \circ \iota_0 =\id$, $\iota_{-1}$ is right adjoint of $s$, and $\iota_{-1} \circ s=\id$. By first applying the functor $\cA \otimes^\bbL - $ to \eqref{eq:split}, and then the functor $E$ to the resulting split short exact sequence of dg categories, we obtain the isomorphism 
\begin{equation}\label{eq:iso-1}
[E(\id \otimes \iota_0), E(\id \otimes \iota_{-1})]: E(\cA) \oplus E(\cA) \stackrel{\sim}{\too} E(\cA \otimes^\bbL \bbP^1)\,.
\end{equation}
The proof of the following general lemma is clear.
\begin{lemma}\label{lem:aux}
If $(f,g):A\oplus A \stackrel{\sim}{\to} B$ is an isomorphism in an additive category, then $(f,f-g):A\oplus A \stackrel{\sim}{\to} B$ is also an isomorphism.
\end{lemma}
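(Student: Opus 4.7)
The plan is to exhibit $(f,f-g)$ as the composite of the given isomorphism $(f,g)$ with an explicit automorphism of $A\oplus A$, thereby reducing the claim to the invertibility of a $2\times 2$ integer matrix acting on a biproduct in an additive category.

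Concretely, I would consider the endomorphism $M:A\oplus A\to A\oplus A$ represented by the matrix
\[
M=\begin{pmatrix} \id_A & \id_A \\ 0 & -\id_A \end{pmatrix},
\]
which makes sense in any additive category because biproducts allow us to interpret such matrices as genuine morphisms. A direct computation in matrix form shows that $(f,g)\circ M=(f,\,f-g)$, so it suffices to prove that $M$ is an automorphism of $A\oplus A$.

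For this, I would simply observe $M^2=\id_{A\oplus A}$ by computing the matrix product entrywise (the diagonal entries give $\id_A$ and the off-diagonal entries cancel), so $M$ is its own inverse. Hence $(f,f-g)=(f,g)\circ M$ is a composition of isomorphisms and is therefore itself an isomorphism.

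No real obstacle arises here; the only thing to be slightly careful about is that one is working in a general additive category rather than in a category of modules, so the argument must be phrased in terms of biproduct matrices and the universal property of $A\oplus A$, rather than by picking elements. This is routine, which is presumably why the paper labels the statement ``clear'' and omits the verification.
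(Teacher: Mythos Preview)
Your proof is correct and is exactly the routine verification the paper has in mind: the paper simply declares the lemma ``clear'' and gives no argument, and your factorization $(f,f-g)=(f,g)\circ M$ with the involutive matrix $M=\begin{pmatrix}\id_A & \id_A\\ 0 & -\id_A\end{pmatrix}$ is the obvious way to make that claim precise.
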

By applying Lemma \ref{lem:aux} to \eqref{eq:iso-1}, we hence obtain the isomorphism 
\begin{equation}\label{eq:iso-2}
[E(\id \otimes \iota_0), E(\id \otimes \iota_0) - E(\id \otimes \iota_{-1})]: E(\cA) \oplus E(\cA) \stackrel{\sim}{\too} E(\cA\otimes^\bbL \bbP^1)\,.
\end{equation}
\begin{proposition}\label{prop:description}
The following composition
$$ E(\cA)\stackrel{\eqref{eq:morphism-aux}}{\too} E(\cA \otimes^\bbL \Nil(k)_{\bf dg}) \too E(\cA \otimes^\bbL \bbP^1)$$
agrees with $E(\id \otimes \iota_0) - E(\id \otimes \iota_{-1})$. 
\end{proposition}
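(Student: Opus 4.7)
The plan is to reduce the claim to the additivity of $E$ applied to a Koszul-type resolution of the skyscraper sheaf at $t=0$ on $\bbP^1$. Concretely, let $\phi: \mathrm{P}(k)_{\bf dg} \to \perf_\dg(\bbP^1)$ denote the composition of the dg functor induced by \eqref{eq:exact2} with the dg functor $\Nil(k)_{\bf dg} \to \perf_\dg(\bbP^1)$ coming from \eqref{eq:exact-1}. I would first give an explicit description of $\phi$, then exhibit a canonical short exact sequence of dg functors $\iota_{-1} \Rightarrow \iota_0 \Rightarrow \phi$, and finally apply additivity to conclude that $E(\phi) = E(\iota_0) - E(\iota_{-1})$; tensoring with $\cA$ on the left would then yield the proposition.

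To identify $\phi$, I would unwind the chain of Morita equivalences used in the proof of Proposition \ref{proof:exact-seq}: the functor \eqref{eq:exact2} sends $M \in \mathrm{P}(k)$ to $(M,0) \in \Nil(k)$; by Grayson-Quillen's equivalence \eqref{eq:equivalence} this corresponds to the $t$-torsion $k[t]$-module $M \otimes_k k[t]/(t)$; and through Thomason-Trobaugh's further Morita identifications $\bbH_{1,t}(k[t])_{\bf dg} \simeq \perf_\dg(\mathrm{Spec}(k[t]))_{Z'} \simeq \perf_\dg(\bbP^1)_Z$ this is in turn $M \otimes_k \cO_0$, where $\cO_0$ denotes the structure sheaf of the closed point $t=0 \in \bbP^1$. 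Viewing $t$ as a global section of $\cO_{\bbP^1}(1)$ vanishing precisely at $0$, the associated Koszul complex then furnishes a short exact sequence in $\perf_\dg(\bbP^1)$,
$$ 0 \too \cO_{\bbP^1}(-1) \stackrel{\cdot t}{\too} \cO_{\bbP^1}(0) \too \cO_0 \too 0\,,$$
which, upon tensoring over $k$ with $M \in \mathrm{P}(k)$, produces the desired short exact sequence of dg functors $\iota_{-1} \Rightarrow \iota_0 \Rightarrow \phi$ from $\mathrm{P}(k)_{\bf dg}$ to $\perf_\dg(\bbP^1)$.

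Since $E$ is a localizing invariant, it satisfies Waldhausen's additivity theorem (available in each of the references cited after Definition \ref{def:localizing}), and so the above short exact sequence yields $E(\iota_0) = E(\iota_{-1}) + E(\phi)$, hence $E(\phi) = E(\iota_0) - E(\iota_{-1})$, as morphisms in $\Spt$. Finally, since $\cA \otimes^\bbL -$ preserves short exact sequences of dg categories (Drinfeld \cite{Drinfeld}), additivity applied after tensoring gives $E(\id \otimes \phi) = E(\id \otimes \iota_0) - E(\id \otimes \iota_{-1})$; combined with the identification of $\phi$ above, together with \eqref{eq:morphism-aux} and the first map of \eqref{eq:triangle}, this is exactly the claim.

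I expect the main obstacle to be the bookkeeping involved in identifying $\phi$ with $M \mapsto M \otimes \cO_0$: one must carefully track the Morita equivalences of Grayson-Quillen and Thomason-Trobaugh through the choice of closed point ($t=0$ versus $t=\infty$) and through the twist conventions on $\bbP^1$, since a wrong choice would produce $\iota_{-1} - \iota_0$ or a Serre-twisted variant rather than $\iota_0 - \iota_{-1}$. Modulo this identification, the remaining steps are essentially formal.
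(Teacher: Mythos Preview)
Your proposal is correct and follows essentially the same route as the paper: both arguments hinge on the short exact sequence $0 \to \cO_{\bbP^1}(-1) \to \cO_{\bbP^1} \to \iota(\mathrm{pt}) \to 0$ (the paper cites \cite[Page~237]{Grayson} for it, you derive it as a Koszul resolution of $\cO_0$) and on additivity of $E$. The only cosmetic difference is in packaging: the paper channels additivity through the universal additive invariant $U:\dgcat(k)\to\Hmo_0(k)$ and a factorization result (Proposition~\ref{prop:factorization}), using that $U$ is a $\otimes$-functor to reduce to the untensored identity $U(\iota)=U(\iota_0)-U(\iota_{-1})$, whereas you invoke Waldhausen additivity directly and handle the tensor with $\cA$ by tensoring the short exact sequence of dg functors (note: your citation of Drinfeld concerns short exact sequences of dg \emph{categories}, not functors, but the step you actually need---that $\id_\cA\otimes^\bbL-$ applied to a short exact sequence of bimodules remains short exact---is immediate).
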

\begin{proof}
As proved in \cite[Cor.~5.10]{IMRN}, there is a bijection between $\Hom_{\Hmo(k)}(\cA,\cB)$ and the set of isomorphism classes of the category $\rep(\cA,\cB)$. Under this bijection, the composition law of $\Hmo(k)$ corresponds to the bifunctor: 
\begin{eqnarray}\label{eq:bi-triang}
\rep(\cA,\cB) \times \rep(\cB,\cC) \too \rep(\cA,\cC) && (\mathrm{B},\mathrm{B}') \mapsto \mathrm{B} \otimes^\bbL_\cB \mathrm{B}'\,.
\end{eqnarray}
Since the $\cA\text{-}\cB$-bimodules \eqref{eq:bimodules111} belong to $\rep(\cA,\cB)$, we have the $\otimes$-functor:
\begin{eqnarray}\label{eq:functor-1}
\dgcat(k) \too \Hmo(k) & \cA \mapsto \cA & F \mapsto {}_F \mathrm{B}\,.
\end{eqnarray}
The {\em additivization $\Hmo_0(k)$ of $\Hmo(k)$} is the additive category with the same objects and abelian groups of morphisms given by $\Hom_{\Hmo_0(k)}(\cA,\cB):=K_0\rep(\cA,\cB)$, where $K_0\rep(\cA,\cB)$ stands for the Grothendieck group of the triangulated category $\rep(\cA,\cB)$. The composition law is induced by the above bitriangulated functor \eqref{eq:bi-triang} and the symmetric monoidal structure by bilinearity from $\Hmo(k)$. Note that we have also the following $\otimes$-functor:
\begin{eqnarray}\label{eq:functor-2}
\Hmo(k) \too \Hmo_0(k) & \cA \mapsto \cA & \mathrm{B} \mapsto [\mathrm{B}]\,.
\end{eqnarray}
Let us denote by $U:\dgcat(k) \to \Hmo_0(k)$ the composition of \eqref{eq:functor-1} with \eqref{eq:functor-2}. 

Now, consider the following composition of dg functors
$$\iota: \perf_\dg(\mathrm{pt}) \simeq \mathrm{P}(k)_{\bf dg} \stackrel{\eqref{eq:exact2}}{\too} \Nil(k)_{\bf dg} \to \perf_\dg(\bbP^1)\,.$$
Thanks to Proposition \ref{prop:factorization} below and to the fact that $U$ is a $\otimes$-functor, it suffices now to show that $U(\iota)$ agrees with $U(\iota_0) - U(\iota_{-1})$. As explained in \cite[Page~237]{Grayson}, we have a short exact sequence $0 \to \cO_{\bbP^1}(-1) \to \cO_{\bbP^1} \to \iota(\mathrm{pt})\to 0$. Consequently, we obtain a short exact of dg functors 
\begin{eqnarray*}
0 \to \iota_{-1} \to \iota_0 \to \iota \to 0 && \iota_{-1}, \iota_0, \iota: \perf_\dg(\mathrm{pt}) \too \perf_\dg(\bbP^1)\,.
\end{eqnarray*}
By construction of the additive category $\Hmo_0(k)$, we hence conclude that $[{}_{\iota}\mathrm{B}]=[{}_{\iota_0}\mathrm{B}]-[{}_{\iota_1}\mathrm{B}]$, \ie that $U(\iota)= U(\iota_0) - U(\iota_{-1})$. This achieves the proof.
\end{proof}
\begin{proposition}\label{prop:factorization}
Given a localizing invariant $E: \dgcat(k) \to \Spt$, there is an (unique) additive functor $\overline{E}: \Hmo_0(k) \to \Spt$ such that $\overline{E} \circ U \simeq E$.
\end{proposition}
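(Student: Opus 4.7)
The plan is to invoke the universal property of the additivization $U:\dgcat(k)\to\Hmo_0(k)$ established in the author's earlier work: namely, that $U$ is universal among functors from $\dgcat(k)$ to additive categories which (i) invert Morita equivalences and (ii) send split short exact sequences of dg categories to direct sums. The task will then reduce to verifying these two properties for the localizing invariant $E$.

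Since $\Spt$ is additive (in fact triangulated/stable), the universal property applies to functors landing in $\Spt$. Property (i) is built into Definition \ref{def:localizing}. For property (ii), I argue as follows: a split short exact sequence $0\to\cA\to\cB\to\cC\to 0$ is by definition equipped with a section $\cC\to\cB$ in $\dgcat(k)$. Applying $E$ yields on one hand the distinguished triangle
$$E(\cA)\to E(\cB)\to E(\cC)\stackrel{\partial}{\to}\Sigma E(\cA)$$
from Definition \ref{def:localizing}, and on the other hand a section $E(\cC)\to E(\cB)$ of the middle morphism. In any triangulated category a distinguished triangle admitting such a section necessarily splits as a direct sum; hence $\partial=0$ and $E(\cB)\simeq E(\cA)\oplus E(\cC)$, which is precisely property (ii).

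With both properties in hand, the universal property of $U$ then produces the required unique additive factorization $\overline{E}:\Hmo_0(k)\to\Spt$ with $\overline{E}\circ U\simeq E$. Concretely, on objects $\overline{E}(\cA)=E(\cA)$, and on a bimodule class $[\mathrm{B}]\in K_0\rep(\cA,\cB)$ one defines $\overline{E}([\mathrm{B}])$ using the upper-triangular dg category $T_{\mathrm{B}}$: it fits into a split short exact sequence $0\to\cB\to T_{\mathrm{B}}\to\cA\to 0$, and the resulting decomposition $E(T_{\mathrm{B}})\simeq E(\cA)\oplus E(\cB)$ together with a second section coming from the case $\mathrm{B}=0$ extracts a well-defined morphism $E(\cA)\to E(\cB)$; additivity in $[\mathrm{B}]$ follows from the localization triangle associated to a triangle of bimodules.

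The main obstacle is not a new mathematical difficulty but rather setting up the universal characterization of $\Hmo_0(k)$ as the receptacle of the universal additive invariant, and then checking that the ad hoc construction of $\overline{E}$ on morphisms is well defined and functorial. Once this universal property is in place (see \cite{IMRN,Duke}), the argument reduces to the split-exactness verification above, and uniqueness of $\overline{E}$ is immediate from the fact that every object of $\Hmo_0(k)$ lies in the image of $U$.
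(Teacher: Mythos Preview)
Your proposal is correct and follows exactly the paper's approach: invoke the universal property of $U:\dgcat(k)\to\Hmo_0(k)$ from \cite{IMRN} and observe that every localizing invariant is in particular an additive invariant. The paper's proof is terser (it simply asserts this last implication), whereas you spell out the split-triangle argument and sketch the upper-triangular construction of $\overline{E}$ on morphisms, but the substance is the same.
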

\begin{proof}
Recall from \cite{IMRN} that a functor $E:\dgcat(k) \to \mathrm{D}$, with values in an additive category, is called an {\em additive invariant} if it inverts Morita equivalences and sends split short exact sequence of dg categories to direct sums. As proved in \cite[Thms.~5.3 and 6.3]{IMRN}, the functor $U:\dgcat(k) \to \Hmo_0(k)$ is the {\em universal additive invariant}, \ie given any additive category $\mathrm{D}$ there is an equivalence of categories
$$U^\ast: \mathrm{Fun}_{\mathrm{additive}}(\Hmo_0(k),\mathrm{D}) \stackrel{\simeq}{\too} \mathrm{Fun}_{\mathrm{add}}(\dgcat(k), \mathrm{D})\,,$$
where the left-hand side denotes the category of additive functors and the right-hand side the category of additive invariants. The proof follows now from the fact that every localizing invariant is in particular an additive invariant.
\end{proof}
The above distinguished triangle \eqref{eq:triangle} gives rise to the long exact sequence
$$
\cdots \to E_{q+1}(\cA \otimes^\bbL\bbP^1) \to E_{q+1}(\cA[1/t]) \to E_q(\cA \otimes^\bbL \Nil(k)_{\bf dg}) \to E_q(\cA\otimes^\bbL \bbP^1) \to \cdots
$$
Note that the following two compositions
\begin{equation}\label{eq:composition}
\xymatrix@C=3em@R=1em{
\perf_\dg(\mathrm{pt}) \ar@/^1ex/[r]^-{\iota_0} \ar@/_1ex/[r]_{\iota_{-1}} & \perf_\dg(\bbP^1) \ar[r]^-{\bbL j^\ast} & \perf_\dg(\mathrm{Spec}(k[1/t]))
}
\end{equation}
agree with the inverse image dg functor induced by $\mathrm{Spec}(k[1/t]) \to \mathrm{pt}$. Making use of the isomorphism \eqref{eq:iso-2}, we hence conclude that the above long exact sequence breaks up into shorter exact sequences
\begin{equation}\label{eq:short-3}
0 \to E_{q+1}(\cA) \to E_{q+1}(\cA[1/t]) \to E_q(\cA \otimes^\bbL \Nil(k)_{\bf dg}) \to E_q(\cA) \to 0 \,.
\end{equation}
Moreover, making use of Proposition \ref{prop:description}, we observe that the last morphism in \eqref{eq:short-3} corresponds to the projection $\mathrm{Nil}E_q(\cA) \oplus E_q(\cA) \to E_q(\cA)$. Consequently, \eqref{eq:short-3} can be further reduced to a short exact sequence 
$$ 0 \too E_{q+1}(\cA) \too E_{q+1}(\cA[1/t]) \too \mathrm{Nil} E_q(\cA) \too 0 \,.$$
From this short exact sequence we obtain finally the searched isomorphism
$$ NE_{q+1}(\cA) \simeq \mathrm{Cokernel}\left(E_{q+1}(\cA) \to E_{q+1}(\cA[1/t]) \right)\simeq \mathrm{Nil}E_q(\cA)\,.$$
\subsection*{Step IV - Module structure over the big Witt ring}
Given a commutative ring $R$, recall from Bloch \cite[Page~192]{Bloch} the construction of the big Witt ring $W(R)$. As an additive group, $W(R)$ is equal to $(1+t R\llbracket t \rrbracket, \times)$. The multiplication $\ast$ is uniquely determined by naturality, formal factorization of the elements of $W(R)$ as $h(t)=\prod_{n=0}^\infty (1-a_n t^n)$, and by the equality $(1-at) \ast h(t) = h(at)$. The zero element is $1+ 0t + \cdots$ and the unit element is $(1-t)$.
\begin{theorem}\label{thm:Witt}
Given a dg category $\cA$, the abelian groups $\mathrm{Nil} \bbK_q(\cA), q \in \bbZ$, carry a $W(k)$-module structure.
\end{theorem}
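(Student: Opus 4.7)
The plan is to adapt Stienstra's classical construction from \cite[\S8]{Stienstra} to the dg setting. Stienstra's $W(k)$-action is assembled from three families of operations on $\Nil(k)$: the Frobenius $F_n \colon (M,f) \mapsto (M, f^n)$, the Verschiebung $V_n$ (defined via induction along the finite flat extension $k[t^n] \hookrightarrow k[t]$, so that $V_n(M,f)$ is $M^{\oplus n}$ equipped with the cyclic nilpotent operator having $f$ in the bottom-left corner and $1$'s on the superdiagonal), and the twisting operations $\langle a \rangle \colon (M,f) \mapsto (M, af)$ for $a \in k$. Each of these is an exact endofunctor of $\Nil(k)$, and hence induces a dg endofunctor of $\Nil(k)_{\bf dg}$. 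Tensoring with $\cA$ then yields dg endofunctors of $\cA \otimes^\bbL \Nil(k)_{\bf dg}$ in $\Hmo(k)$, and therefore group endomorphisms $F_n, V_n, \langle a \rangle$ of $\bbK_q(\cA \otimes^\bbL \Nil(k)_{\bf dg})$ for every $q \in \bbZ$.

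Next I would verify that each of these endomorphisms preserves the canonical splitting $\bbK_q(\cA \otimes^\bbL \Nil(k)_{\bf dg}) \simeq \mathrm{Nil}\,\bbK_q(\cA) \oplus \bbK_q(\cA)$ associated to \eqref{eq:morphism-aux}: since $F_n, V_n$ and $\langle a \rangle$ all send the zero endomorphism to (a sum of) zero endomorphisms, they fix the image of \eqref{eq:morphism-aux} and hence restrict to endomorphisms of $\mathrm{Nil}\,\bbK_q(\cA)$. Following Stienstra, the action of the Teichm\"uller-type element $1 - at^n \in W(k)$ on $x \in \mathrm{Nil}\,\bbK_q(\cA)$ is then defined by an explicit formula involving $V_n$, $\langle a \rangle$, and $F_n$, and the action of a general Witt vector $\prod_{n \geq 1}(1 - a_n t^n)$ is assembled as an infinite ``sum'' of such contributions. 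Well-definedness of this infinite sum rests on the key fact that every class in $\mathrm{Nil}\,\bbK_q(\cA)$ is represented on nilpotent endomorphisms of bounded nilpotency index, so that only finitely many factors $1 - a_n t^n$ contribute non-trivially.

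The main obstacle will be this bounded-support property. In Stienstra's original framework one uses explicit matrix representatives; in the dg setting I would instead argue that $\Nil(k)_{\bf dg}$ is the filtered homotopy colimit of its full dg subcategories $\Nil_N(k)_{\bf dg}$, where $\Nil_N(k) \subset \Nil(k)$ consists of those $(M,f)$ with $f^N = 0$, and then invoke continuity of nonconnective algebraic $K$-theory to conclude that every element of $\bbK_q(\cA \otimes^\bbL \Nil(k)_{\bf dg})$ lifts through $\bbK_q(\cA \otimes^\bbL \Nil_N(k)_{\bf dg})$ for some $N$. With this in place, the $W(k)$-module axioms reduce to checking the standard Witt identities $F_n V_n = n\cdot \id$, $F_n F_m = F_{nm}$, $V_n V_m = V_{nm}$, and the compatibilities of $\langle a \rangle$ with $F_n$ and $V_n$, at the level of exact endofunctors of $\Nil(k)$; each of these then propagates to $\bbK_q(\cA \otimes^\bbL \Nil(k)_{\bf dg})$ by naturality of $\cA \otimes^\bbL -$ and the Morita-invariance of $\bbK$.
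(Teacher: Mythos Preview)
Your overall strategy—adapting Stienstra's $F_n$, $V_n$, $\langle a\rangle$ operations to the dg setting, and using the filtration of $\Nil(k)$ by nilpotency order to make the infinite sums finite—is exactly what the paper does. The gap is in your final paragraph.

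You assert that the $W(k)$-module axioms reduce to checking identities such as $F_nV_n = n\cdot\id$, $V_nV_m = V_{nm}$, and ``the compatibilities of $\langle a\rangle$ with $F_n$ and $V_n$'' \emph{at the level of exact endofunctors of $\Nil(k)$}. Some of these do hold as natural isomorphisms, but the crucial ones do not. Take for instance the relation $\langle a\rangle\, V_n \simeq V_n\,\langle a^n\rangle$, which is what you need for the Witt-ring identity $(1-at)\ast(1-bt^n)=(1-a^nbt^n)$. The only candidate intertwiner between $(M^{\oplus n}, aC_f)$ and $(M^{\oplus n}, C_{a^nf})$ is the diagonal matrix $\mathrm{diag}(1,a,\ldots,a^{n-1})$, and this is invertible only when $a\in k^\times$. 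For general $a\in k$ the two functors are \emph{not} naturally isomorphic, so one genuinely needs an argument that they agree after applying $\bbK_q(\cA\otimes^\bbL -)$. Your proposal does not supply one.

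The paper handles this by a different mechanism. Rather than manipulating the endofunctors $\langle a\rangle$ directly, it routes everything through the biexact pairing
\[
\End(k)\times\Nil(k)\too\Nil(k),\qquad ((M,f),(M',f'))\mapsto(M\otimes M',f\otimes f'),
\]
which after tensoring with $\cA$ and taking $\bbK$ yields an $\End_0(k)$-module structure on $\Nil\bbK_q(\cA)$. The decisive technical step is then the \emph{projection formula} $V_n(\alpha\cdot F_n(\beta))=V_n(\alpha)\cdot\beta$ (Proposition~\ref{prop:projection}). This is proved not by a functor-level identity but by invoking Almkvist's theorem that the characteristic polynomial gives an \emph{injective} group homomorphism $\End_0(\bbZ[x,y];S)\hookrightarrow W(\bbZ[x,y])$: the paper writes down two explicit endomorphisms $\epsilon_1,\epsilon_2$ of $\bbZ[x,y]^{\oplus n}$ encoding the two sides, observes that both have characteristic polynomial $1+(x^ny)t^n$, and concludes that they already agree in $\End_0(\bbZ[x,y];S)$, hence induce the same operation on $\Nil\bbK_q(\cA)$. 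This Almkvist trick is precisely what substitutes for the missing natural isomorphisms in your outline, and you will need it (or an equivalent device) to close the argument.
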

The remaining of this subsection is devoted to the proof of Theorem \ref{thm:Witt}. Recall that for every positive integer $n \geq 1$ we have a {\em Frobenius} functor 
\begin{eqnarray*}
F_n: \End(k) \too \End(k) && (M,f) \mapsto (M,f^n)
\end{eqnarray*}
as well as a {\em Verschiebung} functor
\begin{eqnarray*}
V_n: \End(k) \too \End(k) && (M,f) \mapsto (M^{\oplus n}, \begin{bmatrix} 0 & \cdots & \cdots & 0 &  f \\
1 & \ddots &  &  & 0 \\
0 & \ddots & \ddots &  & \vdots \\
\vdots & \ddots & \ddots & \ddots & \vdots\\
0 &\cdots & 0 & 1 & 0
 \end{bmatrix}_{n \times n}\!\!\!\!\!\!\!\!\!\!)\,.
\end{eqnarray*}
Both these functors are exact and preserve the full subcategory of nilpotent endomorphisms $\Nil(k)$. Moreover, the following diagrams are commutative:
\begin{equation}\label{eq:diagram-0}
\xymatrix{
\End(k) \ar[d]_-{\eqref{eq:exact1}} \ar[rr]^-{F_n} && \End(k) \ar[d]^-{\eqref{eq:exact1}} & \End(k) \ar[d]_-{\eqref{eq:exact1}} \ar[rr]^-{V_n} && \End(k) \ar[d]^-{\eqref{eq:exact1}} \\
\mathrm{P}(k) \ar@{=}[rr] && \mathrm{P}(k) & \mathrm{P}(k) \ar[rr]_-{M \mapsto M^{\oplus n}} && \mathrm{P}(k)\,.
}
\end{equation}
Let us denote by $\End_0(k)$ the kernel of the homomorphism $K_0\End(k) \stackrel{\eqref{eq:exact1}}{\too} K_0\mathrm{P}(k)$. Note that since $\eqref{eq:exact1}\circ \eqref{eq:exact2}=\id$, the homomorphism $K_0\mathrm{P}(k) \stackrel{\eqref{eq:exact2}}{\too} K_0\End(k)$ gives rise to a splitting $K_0\End(k) \simeq \End_0(k) \oplus K_0 \mathrm{P}(k)$. Note also that the image in $\End_0(k)$ of an endomorphism $(M,f)$ is given by $[(M,f)]-[(M,0)]$. Under these notations, we have induced Frobenius and Verschiebung homomorphisms $F_n, V_n: \End_0(k) \to \End_0(k)$. Consider also the following biexact~functor:
\begin{equation}\label{eq:biexact}
\End(k)\times \Nil(k) \too \Nil(k) \quad \quad ((M,f), (M',f')) \mapsto (M\otimes M', f \otimes f')
\end{equation}
and the associated commutative diagram:
\begin{equation}\label{eq:diagram-1}
\xymatrix{
\End(k)\times \Nil(k) \ar[d]_-{\eqref{eq:exact1}\times \eqref{eq:exact1}} \ar[rrr]^-{\eqref{eq:biexact}} & & & \Nil(k) \ar[d]^-{\eqref{eq:exact1}} \\
\mathrm{P}(k) \times \mathrm{P}(k) \ar[rrr]_-{(M,M') \mapsto M \otimes M'} &&& \mathrm{P}(k) \,.
}
\end{equation}
Given a dg category $\cA$, \eqref{eq:diagram-0} and \eqref{eq:diagram-1} give rise to the commutative diagrams:
$$
\xymatrix{
\cA\otimes^\bbL\Nil(k)_{\bf dg} \ar[d] \ar[r]^-{\id \otimes F_n} & \cA \otimes^\bbL \Nil(k)_{\bf dg} \ar[d] & \cA\otimes^\bbL \Nil(k)_{\bf dg} \ar[d]\ar[r]^-{\id \otimes V_n} & \cA\otimes^\bbL \Nil(k)_{\bf dg} \ar[d] \\
\cA\otimes^\bbL \mathrm{P}(k)_{\bf dg} \ar@{=}[r] & \cA\otimes^\bbL \mathrm{P}(k)_{\bf dg} & \cA\otimes^\bbL \mathrm{P}(k)_{\bf dg} \ar[r] & \cA\otimes^\bbL\mathrm{P}(k)_{\bf dg}\,.
}
$$
\begin{equation}\label{eq:diagram-dg}
\xymatrix{
\End(k)_{\bf dg}\otimes^\bbL \cA\otimes^\bbL \Nil(k)_{\bf dg} \ar[d] \ar[r]  & \cA\otimes^\bbL \Nil(k)_{\bf dg} \ar[d] \\
\mathrm{P}(k)_{\bf dg} \otimes^\bbL \cA \otimes^\bbL \mathrm{P}(k)_{\bf dg} \ar[r] & \cA\otimes^\bbL \mathrm{P}(k)_{\bf dg} \,.
}
\end{equation}
In what follows, we will still denote by $F_n, V_n: \Nil\bbK_q(\cA) \to \Nil \bbK_q(\cA)$ the induced Frobenius and Verschiebung homomorphisms. Since $\End_0(k)$ agrees with the kernel of the forgetful homomorphism $K_0(\End(k)_{\bf dg}) \stackrel{\eqref{eq:exact1}}{\too} K_0(\mathrm{P}(k)_{\bf dg})$, we hence obtain from \eqref{eq:diagram-dg} the bilinear pairings:
\begin{eqnarray}\label{eq:pairing}
- \cdot - : \End_0(k) \times \Nil \bbK_q(\cA) \too \Nil\bbK_q(\cA) && q \in \bbZ\,.
\end{eqnarray}
\begin{proposition}\label{prop:projection}
We have the following equality $V_n(\alpha \cdot F_n(\beta)) = V_n(\alpha) \cdot \beta$ for every $\alpha \in \End_0(k)$ and $\beta \in \Nil \bbK_q(\cA)$.
\end{proposition}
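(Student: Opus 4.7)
The plan is to verify the identity at the level of the biexact functors defining the pairing \eqref{eq:pairing}, and to transfer to the dg-enhanced setting via functoriality (the constructions $F_n$, $V_n$, and \eqref{eq:biexact} are all exact in each variable, so they induce dg functors on the derived enhancements).

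Choose representatives $(M,f) \in \End(k)$ for $\alpha$ and $(N,g) \in \Nil(k)$ for a lift of $\beta$. Both sides of the proposed identity then live on the underlying module $(M \otimes N)^{\oplus n}$: the term $V_n(M,f) \otimes (N,g)$ carries the endomorphism $\Phi$ with entries $1 \otimes g$ on the subdiagonal and $f \otimes g$ in the $(1,n)$-corner, while $V_n((M,f) \otimes F_n(N,g))$ carries the endomorphism $\Psi$ with entries $1$ on the subdiagonal and $f \otimes g^n$ in the corner. A direct matrix computation shows that the diagonal map $h := \mathrm{diag}(g^{n-1}, g^{n-2}, \ldots, g, 1)$ (where $g$ abbreviates $1 \otimes g$) satisfies $h \Phi = \Psi h$, hence defines a morphism $h : ((M \otimes N)^{\oplus n}, \Phi) \to ((M \otimes N)^{\oplus n}, \Psi)$ in $\Nil(k)$.

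The crucial observation is that, using flatness of $M$, the kernel $K = \bigoplus_{i=1}^{n-1} M \otimes \ker(g^{n-i})$ and cokernel $C = \bigoplus_{i=1}^{n-1} M \otimes \mathrm{coker}(g^{n-i})$ of $h$ inherit nilpotent endomorphisms whose formulas do \emph{not} involve $f$: the $(1,n)$-entries $f \otimes g$ of $\Phi$ and $f \otimes g^n$ of $\Psi$ contribute trivially, the former because the $n$-th coordinate of $K$ is zero, the latter because $f \otimes g^n$ factors through $\mathrm{im}(1 \otimes g^{n-1})$. Consequently, the biexact functors $((M,f),(N,g)) \mapsto (K, \Phi|_K)$ and $((M,f),(N,g)) \mapsto (C, \Psi|_C)$ factor through the forgetful functor $\End(k) \to \mathrm{P}(k)$ in the first variable. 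Combining the short exact sequences $0 \to K \to ((M\otimes N)^{\oplus n}, \Phi) \to \mathrm{im}(h) \to 0$ and $0 \to \mathrm{im}(h) \to ((M\otimes N)^{\oplus n}, \Psi) \to C \to 0$ in $\Nil(k)$ and applying \eqref{eq:pairing}, the difference $V_n(\alpha) \cdot \beta - V_n(\alpha \cdot F_n(\beta))$ rewrites as $[C] \cdot \beta - [K] \cdot \beta$; since the $\alpha$-dependence factors through $K_0\mathrm{P}(k)$, this vanishes on $\End_0(k) = \mathrm{Kernel}(K_0\End(k) \to K_0\mathrm{P}(k))$, proving the claim. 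The main subtlety is the $f$-independence verification, which is a direct but delicate matrix computation relying on the precise form of $\Phi, \Psi$ and the nilpotence of $g$.
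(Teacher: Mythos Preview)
There is a genuine gap. Your kernel $K = \bigoplus_{i=1}^{n-1} M \otimes \ker(g^{n-i})$ and cokernel $C = \bigoplus_{i=1}^{n-1} M \otimes \mathrm{coker}(g^{n-i})$ are \emph{not} in general objects of $\Nil(k)$: the category $\Nil(k)$ is built from $\mathrm{P}(k)$, and for a nilpotent endomorphism $g$ of a finitely generated projective $k$-module $N$ the submodule $\ker(g^{n-i})$ need not be projective (take $k=\bbZ/4$, $N=k$, $g=\cdot 2$; then $\ker g \simeq \bbZ/2$ is not projective over $\bbZ/4$). The same applies to $\mathrm{im}(h)$. Hence your ``short exact sequences in $\Nil(k)$'' simply do not exist, and there is nothing to feed into the pairing. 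Even if $K$ and $C$ happened to land in $\mathrm{P}(k)$, the assignments $(N,g)\mapsto (K,\Phi|_K)$ and $(N,g)\mapsto (C,\Psi|_C)$ would fail to be exact in the second variable (kernels and cokernels are not exact functors), so you would not obtain \emph{biexact} functors and the passage to dg enhancements and to $\bbK_q(\cA\otimes^\bbL\Nil(k)_{\bf dg})$ breaks down.

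The paper circumvents exactly this obstacle by Stienstra's ``universal'' trick: one introduces a third variable, an auxiliary exact category $\End(\bbZ[x,y];S)$, and a \emph{genuinely} multiexact functor $\theta(-,-,-)$ with values in $\Nil(k)$. One then exhibits two explicit endomorphisms $\epsilon_1,\epsilon_2$ of $\bbZ[x,y]^{\oplus n}$ such that $\theta(\epsilon_1,-,-)$ and $\theta(\epsilon_2,-,-)$ realise the two sides of the identity (your matrices $\Psi$ and $\Phi$, respectively). The equality $V_n(\alpha\cdot F_n(\beta))=V_n(\alpha)\cdot\beta$ is then reduced to showing $[\epsilon_1]=[\epsilon_2]$ in $\End_0(\bbZ[x,y];S)$, which follows from Almkvist's injection $\End_0(R)\hookrightarrow W(R)$ via the characteristic polynomial: both matrices have characteristic polynomial $1+(x^ny)t^n$. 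This replaces your attempt to relate $\Phi$ and $\Psi$ by a non-admissible map $h$ with an equality of $K_0$-classes detected by a ring invariant, and the multiexactness of $\theta$ is what makes the transfer to the dg setting and to $\Nil\bbK_q(\cA)$ legitimate.
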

\begin{proof}
Let $S$ be the multiplicative closed subset of $\bbZ[x,y][s]$ generated by $s$ and $s^n - x^ny$. In what follows, we denote by $\End(\bbZ[x,y];S)$ the full exact subcategory of $\End(\bbZ[x,y])$ consisting of those endomorphisms $(N,g)$ for which there exists a polynomial\footnote{The polynomial $p(s)$ depends (a priori) on the endomorphisms $(N,g)$.} $p(s) \in S$ such that $p(g)=0$. For example, the endomorphisms
\begin{equation}\label{eq:matrix1}
\epsilon_1:= (\bbZ[x,y]^{\oplus n}, \begin{bmatrix} 0 &\cdots & \cdots& 0 & x^ny \\
1 & \ddots & &  & 0 \\
0 & \ddots & \ddots & & \vdots \\
\vdots& \ddots& \ddots & \ddots & \vdots \\
0 &  \cdots & 0 & 1 & 0 \end{bmatrix}_{n \times n}\!\!\!\!\!\!\!\!\!\!)
\end{equation}
\begin{equation}\label{eq:matrix2}
\epsilon_2:= (\bbZ[x,y]^{\oplus n}, \begin{bmatrix} 0 &\cdots & \cdots& 0 & xy \\
x & \ddots & &  & 0 \\
0 & \ddots & \ddots & & \vdots \\
\vdots& \ddots& \ddots & \ddots & \vdots \\
0 &  \cdots & 0 & x & 0 \end{bmatrix}_{n \times n}\!\!\!\!\!\!\!\!\!\!)
\end{equation}
belong to $\End(\bbZ[x,y];S)$ since they satisfy the equation $s^n - x^ny=0$. Following Stienstra \cite[\S5-6]{Stienstra}, consider the multiexact functor
$$
\theta(-,-,-): \End(\bbZ[x,y];S) \times \End(k) \times \Nil(k)  \too \Nil(k)
$$
which sends the triple $((N,g), (M,f), (M',f'))$ to the nilpotent endomorphism $(N \otimes_{\bbZ[x,y]}M\otimes M', g \otimes \id \otimes \id)$,  where the left $\bbZ[x,y]$-module structure on $M\otimes M'$ is given by $x \mapsto f'$ and $y \mapsto f$. Note that the following diagram commutes:
\begin{equation}\label{eq:diagram-3}
\xymatrix{
\End(\bbZ[x,y];S) \times \End(k) \times \Nil(k) \ar[d]_-{\eqref{eq:exact1}\times\eqref{eq:exact1} \times \eqref{eq:exact1}} \ar[rrr]^-{\theta(-,-,-)} &&& \Nil(k) \ar[d]^-{\eqref{eq:exact1}} \\
P(\bbZ[x,y]) \times \mathrm{P}(k) \times \mathrm{P}(k) \ar[rrr]_-{(N,M,M') \mapsto N\otimes_{\bbZ[x,y]}M\otimes M'} &&& \mathrm{P}(k)\,.
}
\end{equation}
Given a dg category $\cA$, \eqref{eq:diagram-3} gives rise to the commutative square:
\begin{equation}\label{eq:diagram5}
\xymatrix{
\End(\bbZ[x,y];S)_{\bf dg} \otimes^\bbL \End(k)_{\bf dg} \otimes^\bbL \cA \otimes^\bbL \Nil(k)_{\bf dg} \ar[r] \ar[d] & \cA\otimes^\bbL \Nil(k)_{\bf dg} \ar[d] \\
\mathrm{P}(\bbZ[x,y])_{\bf dg} \otimes^\bbL \mathrm{P}(k)_{\bf dg} \otimes^\bbL \cA \otimes^\bbL \mathrm{P}(k)_{\bf dg} \ar[r] & \cA \otimes^\bbL \mathrm{P}(k)_{\bf dg}\,.
}
\end{equation}
Consequently, we obtain from \eqref{eq:diagram5} the multilinear homomorphisms:
\begin{eqnarray}\label{eq:multi-2}
& \End_0(\bbZ[x,y];S) \times \End_0(k) \times \Nil \bbK_q (\cA) \too \Nil \bbK_q(\cA) & q \in \bbZ\,.
\end{eqnarray}
Thanks to Lemma \ref{lem:aux2} below, the evaluation of the above homomorphism \eqref{eq:multi-2} at the class $[\epsilon_1] - [(\bbZ[x,y]^{\oplus n}, 0)] \in \End_0(\bbZ[x,y];S)$ reduces to the bilinear pairing:
\begin{eqnarray}\label{eq:pairing1}
\End_0(k) \times \Nil \bbK_n(\cA) \too \Nil \bbK_n(\cA)&& (\alpha, \beta) \mapsto V_n(\alpha \cdot F_n(\beta))
\end{eqnarray}
Similarly, the evaluation of  \eqref{eq:multi-2} at $[\epsilon_2] - [(\bbZ[x,y]^{\oplus n}, 0)]$ reduces to the pairing:
\begin{eqnarray}\label{eq:pairing2}
\End_0(k) \times \Nil \bbK_q(\cA) \too \Nil \bbK_q(\cA) && (\alpha, \beta) \mapsto V_n(\alpha) \cdot \beta\,. 
\end{eqnarray}
Now, recall from Almkvist \cite{Almkvist} (see also \cite[Page 60]{Stienstra}) that the characteristic polynomial gives rise to an {\em injective} group homomorphism 
\begin{eqnarray*}
\End_0(\bbZ[x,y];S) \too W(\bbZ[x,y]) && ([(N,g)]-[(N,0)]) \mapsto \mathrm{det}(\id - gt)\,.
\end{eqnarray*}
Since the matrices \eqref{eq:matrix1}-\eqref{eq:matrix2} have the same characteristic polynomial, namely $1+ (x^ny)t^n$, we hence conclude that $[\epsilon_1]-[(\bbZ[x,y]^{\oplus n},0)]= [\epsilon_2]-[(\bbZ[x,y]^{\oplus n},0)]$. This implies that the above pairings \eqref{eq:pairing1}-\eqref{eq:pairing2} agree and consequently that $V_n(\alpha \cdot F_n(\beta)) = V_n(\alpha) \cdot \beta$ for every $\alpha \in \End_0(k)$ and $\beta \in \Nil \bbK_q(\cA)$.
\end{proof}
\begin{lemma}\label{lem:aux2}
We have the following commutative diagrams:
$$
\xymatrix{
\End(k) \times \Nil(k) \ar[d]_-{\id \times F_n} \ar[rr]^-{\theta(\epsilon_1, -,-)} && \Nil(k) &  \End(k) \times \Nil(k) \ar[d]_-{V_n \times \id} \ar[rr]^-{\theta(\epsilon_2, -,-)} && \Nil(k) \\
\End(k) \times \Nil(k) \ar[rr]_-{\eqref{eq:biexact}} && \Nil(k) \ar[u]_-{V_n} & \End(k) \times \Nil(k) \ar[rr]_-{\eqref{eq:biexact}} & & \Nil(k)  \ar@{=}[u] \,.
}
$$
\end{lemma}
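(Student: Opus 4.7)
Both squares are equalities of exact functors, and the plan is to verify them by directly evaluating on a pair $((M,f),(M',f'))$ and comparing the resulting matrices. All the functors involved ($F_n$, $V_n$, the biexact pairing \eqref{eq:biexact}, and $\theta$) are defined by completely explicit formulas, so the verification reduces to matrix bookkeeping.

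For the first square, the plan is to unpack $\theta(\epsilon_1,(M,f),(M',f'))$: since $\bbZ[x,y]^{\oplus n}$ is a free $\bbZ[x,y]$-module of rank $n$, the underlying $k$-module is $(M\otimes M')^{\oplus n}$, and the endomorphism $\epsilon_1\otimes\id$ is computed entry-by-entry --- the $1$'s on the subdiagonal stay, while the top-right entry $x^n y$ becomes its action on $M\otimes M'$ under the structure $x\mapsto f'$, $y\mapsto f$, namely $f\otimes (f')^n$. Going the other way, $\id\times F_n$ sends $(M',f')$ to $(M',(f')^n)$, the pairing \eqref{eq:biexact} then produces $(M\otimes M',\,f\otimes (f')^n)$, and $V_n$ finally yields the Verschiebung-shape matrix with $f\otimes (f')^n$ in the top-right corner on $(M\otimes M')^{\oplus n}$. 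The two matrices visibly agree.

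The second square is handled in exactly the same way. Here $\epsilon_2$ has $x$'s on the subdiagonal and $xy$ in the top-right, so $\theta(\epsilon_2,(M,f),(M',f'))$ becomes $(M\otimes M')^{\oplus n}$ with the matrix whose subdiagonal entries are $\id\otimes f'$ and whose top-right entry is $f\otimes f'$. Going the other way, $V_n\times\id$ gives $(M^{\oplus n},V_n(f))$, and the pairing \eqref{eq:biexact} produces $(M^{\oplus n}\otimes M',\,V_n(f)\otimes f')$; under the natural identification $M^{\oplus n}\otimes M'\simeq(M\otimes M')^{\oplus n}$, the matrix $V_n(f)\otimes f'$ has $\id\otimes f'$ on the subdiagonal and $f\otimes f'$ in the top-right, matching the first computation.

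The only potential source of confusion --- and therefore the main thing to nail down at the start --- is the convention for the $\bbZ[x,y]$-module structure on $M\otimes M'$ (with $x$ acting via $f'$ and $y$ via $f$) together with the bookkeeping of the tensor-product identifications. Once these are pinned down both diagrams commute on the nose at the level of objects, and since morphisms do not interact with the matrix entries, the equalities extend tautologically to natural equalities of exact functors.
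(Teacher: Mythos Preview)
Your proposal is correct and follows essentially the same approach as the paper: both unpack $\theta(\epsilon_i,(M,f),(M',f'))$ explicitly using the free $\bbZ[x,y]$-module structure on $\bbZ[x,y]^{\oplus n}$ and the rule $x\mapsto f'$, $y\mapsto f$, and then compare with the other leg of each square. The only cosmetic difference is that the paper records the results as \emph{natural isomorphisms} (since one passes through the canonical identifications $\bbZ[x,y]^{\oplus n}\otimes_{\bbZ[x,y]}(M\otimes M')\simeq(M\otimes M')^{\oplus n}$ and $M^{\oplus n}\otimes M'\simeq(M\otimes M')^{\oplus n}$) rather than literal equalities of functors, whereas you speak of equalities ``on the nose''; but you do invoke these identifications yourself, so this is purely terminological.
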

\begin{proof}
Let $(M,f) \in \End(k)$ and $(M',f') \in \Nil(k)$. By definition of $\epsilon_1$ and $\epsilon_2$, we observe that $\theta(\epsilon_1, (M,f),(M',f'))$ is naturally isomorphic  to the endomorphism
$$((M\otimes M')^{\oplus n}, \begin{bmatrix} 0 & \cdots & \cdots & 0 & f\otimes f'^n \\
1 & \ddots & & & 0 \\
0 & \ddots & \ddots & & \vdots \\
\vdots & \ddots & \ddots & \ddots & \vdots \\
0 & \cdots & 0 & 1 & 0
\end{bmatrix}_{n\times n}\!\!\!\!\!\!\!\!\!\!)$$
and that $\theta(\epsilon_2, (M,f),(M',f'))$ is naturally isomorphic to the endomorphism 
$$
(M^{\oplus n} \otimes M', \begin{bmatrix} 0 & \cdots & \cdots & 0 & f \\
1 & \ddots & & & 0 \\
0 & \ddots & \ddots & & \vdots \\
\vdots & \ddots & \ddots & \ddots & \vdots \\
0 & \cdots & 0 & 1 & 0
\end{bmatrix}_{n \times n}\!\!\!\!\!\!\!\!\!\!\!\otimes f')\,.
$$
This achieves the proof.
\end{proof}
Given an integer $m \geq 0$, let $\Nil(k)^m$ be the full exact subcategory of $\Nil(k)$ consisting of those nilpotent endomorphisms $(M,f)$ such that $f^m=0$. By construction, we have an exhaustive increasing filtration $\Nil(k)^m \subset \Nil(k)^{m+1} \subset \cdots \subset \Nil(k)$. 

Given a dg category $\cA$ and an integer $q \in \bbZ$, let us denote by $\Nil \bbK_q(\cA)^m$ the image of the induced homomorphism:
$$
\mathrm{Kernel}\left(\bbK_q(\cA \otimes^\bbL \Nil(k)^m_{\bf dg}) \stackrel{\id \otimes \eqref{eq:exact1}}{\too} \bbK_q(\cA \otimes \mathrm{P}(k)_{\bf dg})\right) \too \Nil\bbK_q(\cA)\,.
$$
Note that $\Nil \bbK_q(\cA)$ identifies with $\bigcup_m \Nil \bbK_q(\cA)^m$ and that the Frobenius homomorphism $F_n: \Nil \bbK_q(\cA) \to \Nil \bbK_q(\cA)$ vanishes on $\Nil \bbK_q(\cA)^m$ whenever $n \geq m$. 

Given elements $a \in k$ and $\beta \in \Nil \bbK_q(\cA)$, consider the following definition
\begin{equation}\label{eq:Witt}
(1-a t^n) \odot \beta := V_n([(k,a)]-[(k,0)])\cdot \beta\,,
\end{equation}
where $(k,a)$ stands for the endomorphism of $k$ given by multiplication by $a$. Thanks to Proposition \ref{prop:projection}, \eqref{eq:Witt} agrees with $V_n(([(k,a)] -[(k,0)])\cdot F_n(\beta))$. Consequently, whenever $\beta \in \Nil \bbK_n(\cA)^m$ with $n \geq m$, we have $(1-a t^n) \odot \beta =0$. Since $\Nil \bbK_q(\cA)$ identifies with $\bigcup_m \Nil \bbK_q(\cA)^m$, we hence obtain the following $W(k)$-module structure on the abelian group $\Nil \bbK_q(\cA)$ (the sum is always finite!):
\begin{eqnarray*}
W(k) \times \Nil \bbK_q(\cA) \too \Nil \bbK_q(\cA) && (\prod_{n=0}^\infty (1-a_nt^n), \beta) \mapsto \sum_n ((1- a_nt^n) \odot \beta)\,.
\end{eqnarray*}
This concludes the proof of Theorem \ref{thm:Witt}.
\begin{remark}
Recall from Almkvist \cite{Almkvist} that the injective group homomorphism 
\begin{eqnarray*}
\End_0(k) \too W(k) && ([(M,f)]-[(M,0)]) \mapsto \mathrm{det}(\id - ft)
\end{eqnarray*}
sends $V_n([(k,a)]-[(k,0)])$ to $1 -a t^n$. This implies that the $W(k)$-module structure on the abelian group $\Nil \bbK_q(\cA)$ extends the above bilinear pairing \eqref{eq:pairing}.
\end{remark}
\subsection*{Proof of item (i)}
As explained by Weibel in \cite[Prop.~1.2]{Web2}, we have a ring homomorphism $\bbZ[1/l] \to W(\bbZ[1/l]), \lambda \mapsto (1-t)^\lambda$. Consequently, using the functoriality of $W(-)$ and the assumption $1/l \in k$, we observe that $W(k)$ is a $\bbZ[1/l^\nu]$-module. By combining Theorem \ref{thm:Witt} with Theorem \ref{thm:fundamental} (with $E=\bbK$), we hence conclude that the groups $N \bbK_q(\cA), q \in \bbZ$, carry a $\bbZ[1/l^\nu]$-module structure. The recursive formula \eqref{eq:formula} (with $H=\bbK_q$) implies that the groups $N^p \bbK_q(\cA), p \geq 1$, are also $\bbZ[1/l^\nu]$-modules. Therefore, making use of the short exact sequences (see Step II)
$$ 0 \to N^p\bbK_q(\cA) \otimes_\bbZ \bbZ/l^\nu \to N^p \bbK_q(\cA;\bbZ/l^\nu) \to \{l^\nu\text{-}\mathrm{torsion}\,\,\mathrm{in}\,\,N^p\bbK_{q-1}(\cA)\} \to 0\,,$$
we hence conclude that all the groups $N^p\bbK_q(\cA;\bbZ/l^\nu)$ are trivial. The convergent right half-plane spectral sequence \eqref{eq:spectral1} then implies that the edge morphisms $\bbK_q(\cA;\bbZ/l^\nu)\to \bbK_q^h(\cA;\bbZ/l^\nu)$ are isomorphisms. The proof follows now from the fact that the canonical dg functor $\cA \to \cA[t]$ gives rise to an homotopy equivalence of spectra $\bbK^h(\cA;\bbZ/l^\nu) \to \bbK^h(\cA[t]; \bbZ/l^\nu)$; see Step I.
\subsection*{Proof of item (ii)}
We start with the following (arithmetic) result:
\begin{lemma}\label{lem:l-group}
When $l$ is nilpotent in $k$, the abelian groups $\Nil \bbK_q(\cA)$ are $l$-groups.
\end{lemma}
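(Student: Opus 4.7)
The plan is to exploit the $W(k)$-module structure on $\Nil\bbK_q(\cA)$ provided by Theorem~\ref{thm:Witt}, together with the exhaustive filtration $\Nil\bbK_q(\cA)=\bigcup_{m\geq 0}\Nil\bbK_q(\cA)^m$ introduced above. It suffices to show that every element $\beta \in \Nil\bbK_q(\cA)^m$ is annihilated by a suitable power of $l$, since this implies that every element of $\Nil\bbK_q(\cA)$ is $l$-power torsion.

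My first step would be to identify the scalar action of $l^N\in\bbZ$ on the $W(k)$-module $\Nil\bbK_q(\cA)$ with the action of the element $(1-t)^{l^N}$, viewed as an ordinary polynomial in $k[t]$. This holds because the additive structure on $W(k)$ is the multiplication of power series, with additive zero $1$ and multiplicative unit $1-t$; consequently $l^N\cdot 1_{W(k)}=(1-t)^{l^N}$ in $W(k)$, and therefore $l^N\beta=(1-t)^{l^N}\odot\beta$.

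Next, assuming $l^M=0$ in $k$, I would apply Kummer's theorem, according to which $v_l\!\binom{l^N}{i}=N-v_l(i)$ for every $1\leq i<l^N$. Choosing $N$ large enough (e.g.\ $N\geq M+\lfloor\log_l(m-1)\rfloor+1$) guarantees that $\binom{l^N}{i}$ is divisible by $l^M$ in $\bbZ$ for every $1\leq i\leq m-1$, and therefore vanishes in $k$. Consequently
\[
(1-t)^{l^N}=\sum_{i=0}^{l^N}\binom{l^N}{i}(-t)^i\equiv 1 \pmod{t^m}\quad\text{in }k\llbracket t\rrbracket.
\]
By the uniqueness of the formal Witt factorization $f(t)=\prod_{n\geq 1}(1-c_n t^n)$ — proved by a coefficient-by-coefficient recursion — this congruence forces a factorization $(1-t)^{l^N}=\prod_{n\geq m}(1-c_n t^n)$ in $W(k)$ for some coefficients $c_n\in k$.

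Finally, I would combine Proposition~\ref{prop:projection} with the defining formula $(1-at^n)\odot\beta=V_n([(k,a)]-[(k,0)])\cdot\beta$ to conclude
\[
l^N\beta=(1-t)^{l^N}\odot\beta=\sum_{n\geq m}V_n\bigl(([(k,c_n)]-[(k,0)])\cdot F_n(\beta)\bigr)=0,
\]
since $F_n(\beta)=0$ for every $n\geq m$, by the very definition of $\Nil\bbK_q(\cA)^m$. The main subtlety of the argument lies in passing between the two descriptions of $(1-t)^{l^N}$ — as an ordinary polynomial in $k[t]$ (where Kummer's estimate on binomial coefficients is transparent) and as a Witt product of factors of the form $(1-c_n t^n)$ (through which the $W(k)$-module structure is actually defined); the uniqueness of the formal Witt factorization is precisely what bridges these two viewpoints.
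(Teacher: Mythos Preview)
Your proof is correct and follows essentially the same approach as the paper's: both use the $W(k)$-module structure together with the filtration $\Nil\bbK_q(\cA)=\bigcup_m\Nil\bbK_q(\cA)^m$, and both hinge on the fact that $(1-t)^{l^N}\in 1+t^m k\llbracket t\rrbracket$ for $N\gg 0$, so that its Witt factorization involves only factors $(1-c_nt^n)$ with $n\geq m$, which act trivially on $\Nil\bbK_q(\cA)^m$. The only difference is that the paper cites Weibel \cite[\S1.5]{Web2} for the congruence $(1-t)^{l^r}\equiv 1\pmod{t^m}$, whereas you supply a direct proof via Kummer's valuation formula $v_l\binom{l^N}{i}=N-v_l(i)$.
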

\begin{proof}
Recall that the unit of $W(k)$ is $(1-t)$. Let $m \geq 0$ be a fixed integer. As explained by Weibel in \cite[\S1.5]{Web2}, whenever $l$ is nilpotent in $k$ there exists an integer $r \gg 0$ (which depends on $m$) such that $(1-t)^{l^r} \in 1 + t^m k\llbracket t\rrbracket$. This implies that the formal factorization of $(1-t)^{l^r}$ only contains factors $(1- a_n t^n)$ with $n \geq m$. As in Step IV, we hence observe that every element $\beta$ of $\Nil\bbK_q(\cA)^m$ is $l^r$-torsion. Finally, since $\Nil\bbK_q(\cA)\simeq \bigcup_m \Nil\bbK_q(\cA)^m$, we conclude that $\Nil \bbK_q(\cA)$ is a $l$-group.
\end{proof}
By combining Lemma \ref{lem:l-group} with Theorem \ref{thm:fundamental} (with $E=\bbK$), we conclude that the abelian groups $N\bbK_q(\cA), q \in \bbZ$, are $l$-groups. The recursive formula \eqref{eq:formula} (with $H=\bbK$) implies that the abelian groups $N^p\bbK_q(\cA), p \geq 1$, are also $l$-groups. Therefore, we have $N^p\bbK_q(\cA)_{\bbZ[1/l]}=0$. Making use of the convergent right half-plane spectral sequence \eqref{eq:spectral2}, we hence observe that the edge morphisms $\bbK_q(\cA)_{\bbZ[1/l]}\to \bbK_q^h(\cA)_{\bbZ[1/l]}$ are isomorphisms. The proof follows now from the fact that the dg functor $\cA \to \cA[t]$ gives rise to an homotopy equivalence of spectra $\bbK^h(\cA)\otimes \bbZ[1/l] \to \bbK^h(\cA[t])\otimes \bbZ[1/l]$; see Step I. 

\section{Proof of Theorem \ref{thm:computation}}
Thanks to Corollary \ref{cor:computation}, it suffices to compute the kernel and the cokernel of the (matrix) homomorphism \eqref{eq:homo-key} in the case where $m=0$ and $Q=A_n$. The kernel is the solution of the following system of linear equations with $\bbZ/l^\nu$-coefficients:
$$\begin{cases}
-2x_1 +x_2 =0 \\
-x_1 - x_2 + x_3 =0 \\
\quad \quad \quad \vdots \\
-x_1 - x_j + x_{j+1} =0 \\
\quad \quad \quad \vdots \\
-x_1 - x_{n_1} + x_n =0 \\
-x_1 -x_n =0\,.
\end{cases}
\Leftrightarrow \begin{cases}
x_2 =2x_1 \\
x_2 = -(n-1)x_1 \\
\quad \quad \vdots \\
x_j=-(n-j+1) x_1 \\
\quad \quad \vdots \\
x_{n-1}=-2x_1 \\
x_n = -x_1\,.
\end{cases} \Leftrightarrow
\begin{cases}
(n+1)x_1 =0 \\
\quad \quad  \vdots \\
x_j=-(n-j+1)x_1 \\
\quad \quad  \vdots \\
x_n=- x_1\,.
\end{cases} 
$$
From the above resolution of the system, we observe that the kernel is isomorphic to the $(n+1)$-torsion in $\bbZ/l^\nu$ or equivalently to the cyclic group $\bbZ/\mathrm{gcd}(n+1,l^\nu)$. Let us now compute the cokernel. Consider the following (matrix) homomorphism:
\begin{equation}\label{eq:auxiliar}
\begin{bmatrix}
-2 & 1 & 0  &\cdots &0 \\
-1 & -1 & \ddots &\ddots & \vdots\\
-1 & 0 & \ddots & \ddots & 0 \\
 \vdots& \vdots& \ddots & \ddots & 1 \\
 -1 &0 & \cdots& 0 & -1
\end{bmatrix}\colon\bigoplus^n_{r=1} \bbZ \too \bigoplus_{r=1}^n \bbZ
\end{equation}
Note that the cokernel of \eqref{eq:auxiliar} is isomorphic to $\bbZ/(n+1)$. A canonical generator is given by the image of the vector $(0,\cdots, 0, -1) \in \bigoplus^n_{r=1} \bbZ$. Using the fact that the functor $-\otimes_\bbZ \bbZ/l^\nu$ is left exact, we hence conclude that the cokernel of \eqref{eq:homo-key} is isomorphic to $\bbZ/(n+1) \otimes_\bbZ \bbZ/l^\nu \simeq \bbZ/ \mathrm{gcd}(n+1,l^\nu)$. This concludes the proof.
\begin{remark}\label{rk:Grothendieck}
Thanks to \cite[Cor.~2.11]{dgOrbit}, the Grothendieck group of $\cC_{A_n}^{(0)}$ identifies with the cokernel of \eqref{eq:auxiliar}. We hence observe that $K_0(\cC_{A_n}^{(0)})\simeq \bbZ/(n+1)$.
\end{remark}
\section{Proof of Proposition \ref{prop:computation-last}}
Similarly to the proof of Theorem \ref{thm:computation}, it suffices to compute the kernel and cokernel of the (matrix)  homomorphism \eqref{eq:homo-key} in the case where $m=1$ and $Q$ is the generalized Kronecker quiver $\xymatrix@C=1.7em@R=1em{1\ar@<0.7ex>[r]\ar[r]\ar@<-0.7ex>[r] & 2}$. The kernel is given by the solution of the following system of linear equations with $\bbZ/l^\nu$-coefficients:
\begin{equation}\label{eq:system}
\begin{cases}
-9x_1 +3x_2 =0 \\
-3x_1 =0 \,.
\end{cases}
\end{equation}
Clearly, the solution of \eqref{eq:system} is $(3\text{-}\mathrm{torsion}\,\,\mathrm{in}\,\,\bbZ/l^\nu) \times (3\text{-}\mathrm{torsion}\,\,\mathrm{in}\,\,\bbZ/l^\nu)$ or equivalently the cyclic group $\bbZ/\mathrm{gcd}(3,l^\nu) \times \bbZ/\mathrm{gcd}(3,l^\nu)$. Note that the latter group is isomorphic to $\bbZ/3 \times \bbZ/3$ when $l=3$ and is zero otherwise. Let us now compute the cokernel. Consider the following (matrix) homomorphism:
\begin{equation}\label{eq:square}
\begin{bmatrix} -9 & 3 \\-3& 0\end{bmatrix}: \bbZ \oplus \bbZ \too \bbZ \oplus \bbZ\,.
\end{equation}
The cokernel of \eqref{eq:square} is isomorphic to $\bbZ/3 \times \bbZ/3$. Canonical generators are given by the image of the vectors $(1,0)$ and $(-3,-1)$. Since the functor $-\otimes_\bbZ \bbZ/l^\nu$ is left exact, we hence conclude that the cokernel of \eqref{eq:homo-key} is isomorphic~to
$$
(\bbZ/3 \times \bbZ/3) \otimes_\bbZ \bbZ/l^\nu \simeq \bbZ/3\otimes_\bbZ \bbZ/l^\nu \times \bbZ/3 \otimes_\bbZ \bbZ/l^\nu \simeq \bbZ/\mathrm{gcd}(3,l^\nu) \times \bbZ/\mathrm{gcd}(3,l^\nu)\,.
$$
Once again, the right-hand side abelian group is isomorphic to $\bbZ/3 \times \bbZ/3$ when $l=3$ and is zero otherwise. This concludes the proof.
\begin{remark}
Similarly to Remark \ref{rk:Grothendieck}, the Grothendieck group of $\cC_Q^{(1)}$ identifies with the cokernel of \eqref{eq:square}. We hence observe that $K_0(\cC_Q^{(1)})\simeq \bbZ/3 \times \bbZ/3$.
\end{remark}

\end{document}